\tikzset{
    >=stealth,
    every picture/.style={thick},
    graphs/every graph/.style={empty nodes},
}
\tikzstyle{vertex}=[
\tikzstyle{printersafe}=[decoration={snake,amplitude=0pt}]
\newcommand{\codim}{\operatorname{codim}}
\newcommand{\supp}{\operatorname{Supp}}
\renewcommand{\qq}{\mathbb{Q}}
\newcommand{\zz}{\mathbb{Z}}
\newcommand{\rr}{\mathbb{R}}
\newtheorem{introthm}{Theorem}
\newtheorem{theorem}{Theorem}[section]
\newtheorem{lemma}[theorem]{Lemma}
\newtheorem{proposition}[theorem]{Proposition}
\newtheorem{corollary}[theorem]{Corollary}
\theoremstyle{definition}
\newtheorem{notation}[theorem]{Notation}
\newtheorem{definition}[theorem]{Definition}
\newtheorem{example}[theorem]{Example}
\newtheorem{remark}[theorem]{Remark}
\theoremstyle{remark}
\numberwithin{equation}{section}
\begin{document}

\title[Termination of pseudo-effective 4-fold flips]{Termination of pseudo-effective 4-fold flips}

\author[J.~Moraga]{Joaqu\'in Moraga}
\address{UCLA Mathematics Department, Box 951555, Los Angeles, CA 90095-1555, USA
}
\email{jmoraga@math.ucla.edu}

\subjclass[2020]{Primary 14E30, 
Secondary 14F18.}

\thanks{The author was partially supported by NSF research grants no: DMS-1300750, DMS-1265285 
and by a grant from the Simons Foundation; Award Number: 256202}

\begin{abstract}
Let $(X,B)$ be a pseudo-effective log canonical $4$-fold over an algebraically closed field of characteristic zero. 
We prove that any sequence of $(K_X+B)$-flips terminates.
In particular, any minimal model program for $(X,B)$ terminates.
\end{abstract}

\maketitle

\setcounter{tocdepth}{1}
\tableofcontents

\section*{Introduction}

One of the aims of algebraic geometry 
is the classification of smooth projective varieties. 
The perspective of birational geometry 
is that this classification should be achieved by first 
performing some natural surgeries to our smooth projective variety, 
called birational modifications. 
A birational transformation is an isomorphism on an open set of our variety
and so it keeps many of its intrinsic characteristics. 
The minimal model program (MMP)  is a process that allows us to 
algorithmically perform such surgeries. 
This attempts to generalize the theory of minimal surfaces to higher dimensions. 
Starting with a smooth projective variety $X$, we perform 
a sequence of birational transformations
\[
X \dashrightarrow X_1 \dashrightarrow X_2 \dashrightarrow 
\dots \dashrightarrow X_n 
\]
such that curvature of $X_n$ is improved.
More precisely, the birational maps $\pi_i\colon X_i\dashrightarrow X_{i+1}$ are either flips or divisorial contractions.
The former is a small surgery, i.e., an isomorphism outside a closed subset of codimension at least two, while the latter contracts prime divisors and decreases the Picard rank. 
The aim is that $X_n$ must be either a {\em minimal model}, i.e., $K_{X_n}$ is nef or it admits a {\em Mori fiber space structure} $X_n\rightarrow W$, i.e., $-K_{X_n}$ is Fano over $W$ and $\dim W < \dim X_n$. 
The varieties appearing in this process tend to be singular, however, the singularities of the MMP tend to be mild. 
Two of the main goals of the MMP is to show that this algorithm exists 
and that it terminates. 
To do so, one needs to show that flips exist and they terminate, i.e., 
any sequence of flips is indeed finite. 
While we know how to control the sequence of divisorial contractions via the Picard rank, we do not know how to control the sequence of flips.
So far, the most successful approach to control sequence of flips has been the study of the singularities of the varieties $X_i$. This is the approach that we follow in this paper.
Below, we explain some of the advances in the existence and termination of flips and explain the main result of this article.

As mentioned above, two of the main goals of the minimal model program 
are to prove the existence of flips
and the termination of a sequence of such birational transformations.
The existence 
of flips for terminal $3$-folds was
achieved by Mori in ~\cite{Mor88}.
Koll\'ar and Mori classified terminal $3$-fold flips~\cite{KM92}.
The existence and termination of log canonical $3$-fold flips were achieved by Shokurov~\cites{Sho92,Sho96}.
Many of these results are based on the work of Kawamata on $3$-fold singularities~\cite{Kaw88}.
This result was generalized to the log canonical
case by Shokurov~\cite{Sho96}.
All these proofs rely on a careful analysis of the flipping contractions for $3$-folds.
In dimension $4$,
Kawamata settled the existence of smooth $4$-fold flips in ~\cite{Kaw89}.
In~\cite{KMM87}, the authors proved the termination of terminal flips for $4$-folds.
In~\cite{Sho04}, Shokurov proved the existence of flips for $4$-folds.
In~\cite{Sho85}, Shokurov introduced the {\em difficulty function} to study extremal ray contractions on algebraic varieties.
In~\cite{Fuj04}, Fujino used a variant of the difficulty function to prove the termination of canonical $4$-fold flips.
Later, in~\cite{AHK} Alexeev, Hacon, and Kawamata 
proved the termination of many klt $4$-fold flips using a similar invariant to the one in~\cite{Sho04}.
In ~\cite{Sho92}, Shokurov introduced a special sequence 
of flips for the minimal model program of a pair, 
called {\em ordered flips}, or {\em flips with scaling}, and proved that
any sequence of flips with scaling for $4$-folds terminates.
The existence of flips in arbitrary dimension for klt pairs was finally achieved by
Birkar, Cascini, Hacon, and M\textsuperscript{c}Kernan in ~\cite{BCHM10}.
The authors also proved the termination of flips with scaling
for klt pairs $(X,B)$ with $B$ a big $\rr$-divisor.
In~\cites{Bir12,HX13}, the existence of log canonical flips was proved.
In ~\cite{Bir07}, Birkar proved the termination of any sequence
of flips for klt pairs with $K_X+B \sim_\rr E\geq 0$
assuming termination of flips in dimension $\dim X -1$
and the ACC conjecture for log canonical thresholds.
The ACC conjecture for log canonical thresholds was proved by Hacon, M\textsuperscript{c}Kernan, and Xu in~\cite{HMX14}.

In this article, we prove the termination of flips for pseudo-effective log canonical $4$-folds.

\begin{introthm}\label{termination}
Let $(X,B)$ be a projective log canonical $4$-fold over an algebraically closed field of characteristic zero.
Assume that $K_X+B$ is pseudo-effective. 
Then, any sequence of $(K_X+B)$-flips terminates.
\end{introthm}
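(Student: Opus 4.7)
The plan, following the author's sketch, is to adapt Birkar's argument from \cite{Bir07} to the pseudo-effective rather than effective setting, replacing the log canonical threshold of an effective representative of $K_X+\Delta$ by the generalized log canonical threshold ${\rm lct}(K_X+\Delta)$ of Definition~\ref{lctdefinition}. The two main ingredients are the weak monotonicity of this invariant along any sequence of flips (Lemma~\ref{acc}), and the ACC for generalized log canonical thresholds supplied by Hacon--McKernan--Xu~\cite{HMX14}. Together, these force termination as soon as one shows that, in any infinite klt flip sequence, the invariant strictly increases after finitely many steps, possibly after small modifications and passing to a quasi-projective open subvariety.

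\emph{Reduction to the klt case.} Given an lc $4$-fold $(X,\Delta)$ with $K_X+\Delta$ pseudo-effective, I would pass to a $\qq$-factorial dlt modification $(Y,\Delta_Y)\to (X,\Delta)$ and lift the flip sequence to $Y$. Since lc centers of $(Y,\Delta_Y)$ have dimension at most $3$, termination of induced flips on the strict transforms of these centers follows from termination of flips in dimensions $\leq 3$; this localizes the problem to the klt case.

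\emph{Strict increase of the lct.} Now assume an infinite sequence of klt flips $(X_j,\Delta_j)\dashrightarrow (X_{j+1},\Delta_{j+1})$ with $K_{X_j}+\Delta_j$ pseudo-effective, and suppose for contradiction that $t_j := {\rm lct}(K_{X_j}+\Delta_j)$ is eventually constant (otherwise the ACC concludes). For each $j$ I would extract a divisorial valuation $E_j$ of the common function field that computes $t_j$ and follow its center on $X_j$. Using the negativity lemma, the log discrepancy of such a place is non-increasing along the sequence and strictly decreases whenever the center of $E_j$ meets the flipping locus. Because the flipping locus in a $4$-fold has dimension $\leq 2$, termination results on these loci (which reduce to dimensions $\leq 3$) should force the centers of the $E_j$ to stabilize after finitely many flips. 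After extracting a stabilized $E_j$ by a small modification and removing its image, one can then argue on the resulting quasi-projective open that the generalized lct must strictly jump up, producing the desired contradiction.

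\emph{Main obstacle.} The chief difficulty is executing this strict-increase step. In Birkar's effective setting $K_X+\Delta\sim_\qq D\geq 0$, the divisor $D$ gives a concrete geometric object whose singularities one tracks, and improvement of those singularities is visible on $D$ itself. In the pseudo-effective setting there is no canonical effective representative, so one must work either with Nakayama's divisorial Zariski decomposition $K_X+\Delta = P+N$ or with a small ample perturbation, and understand how $P$ and $N$ evolve through a sequence of flips. Controlling both parts simultaneously, and showing that the lc places realizing the generalized lct behave compatibly with this decomposition through an infinite sequence of pseudo-effective klt $4$-fold flips, is where I expect the bulk of the technical work to lie.
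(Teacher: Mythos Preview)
Your overall strategy matches the paper's, and the reduction to the klt case is fine (the paper invokes special termination \cite[Corollary~4]{Shok04} directly rather than passing to a dlt modification, but this is minor). The gap is in the strict-increase step.

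First, a sign: log discrepancies \emph{increase} under flips (Proposition~\ref{monotonicity}), so if $E_j$ computes the threshold and its center meets the flipping locus, $E_j$ ceases to be an lc place afterward---but other valuations may still compute the threshold, so tracking one $E_j$ is not enough. The real issue is that you must show \emph{every} generalized lc center $W_i$ of $(c+1)(K_X+\Delta)$ is eventually avoided by the flipping loci, and the restriction of a $4$-fold flip to $W_i$ is not a flip in any classical sense: it is a strict ample quasi-flip for a \emph{generalized} lc pair $(W_i,B_{W_i}+M_{W_i})$ (Proposition~\ref{qflipslcp}), with the nef part $M_{W_i}$ encoding precisely the non-effectivity of $K_X+\Delta$. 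Constructing this adjunction (Section~\ref{section:adjunction}, via the ample perturbations $(X,\Delta_\lambda)$, stabilization of their lc places, and a limit as $\lambda\to 0$) and then proving that such quasi-flip sequences under a DCC set terminate in dimensions $\leq 3$ (Corollary~\ref{tersurfaces}, Proposition~\ref{ter3folds}, using a generalized difficulty function and DCC control via Proposition~\ref{wehaveDCC}) is the substance of the paper; your phrase ``termination results on these loci (which reduce to dimensions $\leq 3$)'' assumes exactly what must be proved, and known $3$-fold termination does not apply to generalized quasi-flips. You correctly flag the Nakayama/perturbation issue as the obstacle, but the proposal does not indicate how to resolve it. Finally, the ACC input needed is \cite[Theorem~1.5]{BZ16} for \emph{generalized} log canonical thresholds, not \cite{HMX14}; this is what Lemma~\ref{acc} invokes.
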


A straightforward consequence is the termination of any minimal model program for a pseudo-effective log canonical $4$-fold.

\begin{introthm}\label{thm:min-models}
Let $(X,B)$ be a projective log canonical $4$-fold over an algebraically closed field of characteristic zero.
Assume that $X$ is $\qq$-factorial
and $K_X+\Delta$ is pseudo-effective.
Then, any minimal model program for $(X,B)$ terminates with a minimal model.
\end{introthm}

In Theorem~\ref{termination-gen}, we will prove a version of Theorem~\ref{termination} and Theorem~\ref{thm:min-models}
for generalized pairs.

A minimal model is assumed to be
$\qq$-factorial. This explains the $\qq$-factoriality assumption of Theorem~\ref{thm:min-models}. 

The main idea of this article is to use the existence of minimal models for pseudo-effective $4$-folds~\cite{Sho09} to induce a generalized pair structure on each pair in the sequence of flips.
In the case of generalized pairs,
the existence of minimal models 
for pseudo-effective $4$-folds is proved in~\cite{LT19}.
Hence, we will heavily use the language of generalized pairs. We mimic the strategy of~\cite{Bir12} and use the ACC for generalized log canonical thresholds~\cite{BZ16}*{Theorem 1.5} to prove that flips terminate.
To achieve this, we need to prove the termination of generalized Kawamata log terminal $3$-fold flips. 
This $3$-dimensional part of the article follows closely the proof of termination of $3$-fold flips due to Shokurov~\cite{Sho96}.
To achieve this, we will use the 
existence of generalized dlt modifications (see, e.g.~\cite{FS20}*{Theorem 2.9}).
The idea of using minimal models to prove the termination of flips has been introduced by Birkar~\cites{Bir12b,BH14}.

For the sake of the exposition, we will present the results
starting from the lower dimensional cases.
In Section~\ref{S2}, we prove the termination of generalized klt $3$-fold flips.
In Section~\ref{S3}, we prove the special termination of flips for $\qq$-factorial generalized dlt $4$-folds, i.e., flips are eventually disjoint from the locus of log canonical centers.
In Section~\ref{S5}, we prove the termination of flips for $\qq$-factorial pseudo-effective dlt $4$-folds.
In this section, we also prove the main theorem of this article: the termination of pseudo-effective $4$-fold flips.\\

To culminate the introduction,
we give a brief sketch of the proof of Theorem~\ref{termination}. \\

\textit{Sketch of the proof:} 
The proof is by contradiction.
Assume that there exists an infinite sequence of flips for $(X,B)$ with $K_X+B$ pseudo-effective.
By~\cite{Sho09}, we know that $K_X+B$ is numerically equivalent to a sum 
$N+P$, where $N$ is an effective divisor
and $P$ is the push-forward of a nef divisor on a higher birational model of $X$.
In other words, $N$ is a b-nef divisor (see Definition~\ref{def:b-nef}). 
We let 
\[
X\dashrightarrow X_1 \dashrightarrow X_2 \dashrightarrow X_3 \dashrightarrow \cdots 
\dashrightarrow X_i \dashrightarrow \cdots 
\] 
be the models in the sequence of flips.
For each $i$, we let $N_i$ (resp. $P_i$)
be the push-forward of $N$ (resp. $P$) to the model $X_i$.
On each model $X_i$, we will consider the invariant 
\[
\lambda_i:= {\rm glct}((X_i,B_i);N_i+P_i) 
\in \rr_{\geq 0} \cup \{\infty\}.
\]
We define this invariant in Definition~\ref{def:glct}.
This number measures how much we can add $N_i+P_i$ to $B_i$
so that the singularities of $(X_i,B_i+\lambda_i N_i+\lambda_i P_i)$ are still singularities of the MMP (see subsection~\ref{ss:glct}).
Note that $P_i$ may not be effective, so we need to work with generalized singularities (see Definition~\ref{def:gen-sing}).
By Proposition~\ref{prop:finiteness}, the value $\lambda_i$ is finite unless $K_{X_i}+B_i$ is nef.
Furthermore, by Proposition~\ref{monotonicity} the sequence $(\lambda_i)_{i\geq 1}$, is non-decreasing.
By~\cite{BZ16}*{Theorem 1.5}, the sequence $(\lambda_i)_{i\geq 1}$ satisfies the ascending chain condition, so it eventually stabilizes to a number $\lambda_\infty$.
In Definition~\ref{def:gen-sing}, we define the generalized log canonical centers: the most singular locus of the generalized pair.
If the flipping loci in the sequence of flips are eventually disjoint from the generalized log canonical centers of $(X_i,B_i+\lambda_\infty N_i+\lambda_\infty P_i)$, then we can consider the sequence of flips in the complement of this locus.
By doing so, we may increase $\lambda_\infty$.
Otherwise, we have an infinite sequence of flips so that the flipping loci in the sequence intersect some generalized log canonical center $V$ infinitely many times
(see Definition~\ref{def:gen-sing}).
The previous argument is worked in Section~\ref{S5}.

Thus, we need to prove that the sequence of flips terminates around $V$, i.e., 
it is eventually disjoint from $V$.
This problem is known as special termination (see Section~\ref{S3}).
Using the existence of generalized dlt modifications (see, e.g.,~\cite{FS20}*{Theorem 2.29}), we may assume that $(X_i,B_i+\lambda_\infty N_i+\lambda_\infty P_i)$ are gdlt.
A generalized dlt pair is a glc pair where the generalized log canonical centers are snc at the generic points (see Definition~\ref{def:gdlt}).
Using Proposition~\ref{adj}, we can endow $V$ with the structure of a generalized pair $(V,B_V+M_V)$.
By Proposition~\ref{qflipslcp}, we obtain an infinite sequence of ample quasi-flips for $V$.
An ample quasi-flip is a sort of negative birational map that may contract and extract divisors (see Definition~\ref{qfgen}).
The concept of ample quasi-flips is crucial for our proof.
In Example~\ref{ex:ample-quasi-flip}, we show how flips for plt pairs naturally induce ample quasi-flips along boundary divisors. 
To complete the contradiction, 
it suffices to prove that this sequence
of ample quasi-flips in $V$ terminates.
In Proposition~\ref{tercod1}, we prove that this sequence of quasi-flips terminates in codimension one, so it is eventually a sequence of flips for a $3$-dimensional generalized klt pair.
To prove this termination, in Section~\ref{S2}, we generalize some techniques from~\cite{Sho96} 
to the setting of generalized pairs. 
In Proposition~\ref{ter3folds}, we will prove that this sequence of flips is eventually $B_V$-trivial and $M_V$-trivial.
This means that eventually, it is a sequence of flips for a klt $3$-fold and the termination is known in this case (see, e.g.,~\cite{KM98}*{Theorem 6.17}).
This finishes the sketch of the proof.\\

\textit{Postscript:} During the referee process of this paper Guodu Chen and Nikolaos Tsakanikas proved Theorem~\ref{termination} using similar methods (see~\cite{CT23}).

\subsection*{Acknowledgements}
The author would like to thank 
Caucher Birkar,
Christopher Hacon, 
Stefano Filipazzi, 
Tommaso de Fernex,
and Yoshinori Gongyo.
The author would like to express his gratitude to the anonymous referees for the careful report and all their suggestions.

\section{Preliminaries and notation}
\label{section:preliminaries}

In this section, we recall classic results and introduce some notation.
We will follow the notation of standard references in 
algebraic geometry~\cites{Laz04a}
and the minimal model program~\cites{KM98,HK10,Kol13}.
Throughout this paper, we will work over an algebraically closed field $\mathbb{K}$ of characteristic zero.
We work with $\rr$-divisors on algebraic varieties unless otherwise stated.

\begin{definition}
We work on normal varieties $X$ projective over $Z$.
Thus, we assume the existence of a projective morphism $X\rightarrow Z$.
We may write $X/Z$ to denote the relative setting.
A projective morphism $\psi\colon X\rightarrow Z$ is called a {\em contraction} if $\psi_*\mathcal{O}_X=\mathcal{O}_Z$.
In particular, if $X$ is normal, then $Z$ is normal.
Let $\pi\colon Y\dashrightarrow X$ be a birational map, its {\em exceptional locus}, denoted by ${\rm Ex}(\pi)$, is the union of the locus where $\pi$ is not defined and the locus where $\pi$ is defined and is not an isomorphism.
\end{definition}

\begin{definition}
Let $f\colon X\rightarrow Z$ be a projective morphism.
We denote by $N_1(X/Z)$ the $\rr$-vector space generated by irreducible curves
$C\subset X$ with $f(C)={\rm point}$, modulo numerical equivalence. 
The {\em relative Picard rank}, denoted by $\rho(X/Z)$, is the dimension of the $\rr$-vector space $N_1(X/Y)$. 
\end{definition}

\begin{definition}
Let $f\colon X\rightarrow Z$ be a projective morphism.
Let $C$ be a curve $C\subset X$ with $f(X)={\rm point}$.
Let $W_C$ be the union of all the effective curves on $X$,
that maps to points in $Z$, and are numerically equivalent to $C$ over $Z$.
We say that $C$ is {\em movable in codimension one over $Z$} 
if the closed subset $\overline{W_C}$ has codimension at most one. 
\end{definition}

\begin{definition}
Let $X/Z$ be a normal variety projective over $Z$.
Let $D$ be an $\rr$-Cartier $\rr$-divisor on $X$.
We say that the divisor $D$ is {\em pseudo-effective} over $Z$ if it lies in the closure
of the cone of big divisors of $X$ over $Z$.
We say that a  divisor $D$ is {\em nef} over $Z$ if $D\cdot C\geq 0$
for every curve $C\subset X$ that is contracted on $Z$.
A divisor $D$ on $X$ is said to be {\em anti-nef} over $Z$ if $-D$ is a nef divisor over $Z$.
\end{definition}

\begin{definition}
Let $X/Z$ be a normal variety projective over $Z$.
Let $\pi \colon Y \rightarrow X$ be a projective birational morphism over $Z$ from a normal variety $Y$
and let $E$ be a prime divisor on $Y$.
We say that $c_E(X):=\pi (E)$ is the {\em center} of $E$ on $X$.
We may identify the class of prime divisors over $X$ with the class of {\em divisorial valuations} of the function field $\mathbb{K}(X)$.
The center on $X$ of a divisorial valuation is the center
of the corresponding prime divisor.
\end{definition}

\subsection{Generalized pairs}
In this subsection, we recall some standard definitions around generalized pairs.
We will use the language of birational divisors on algebraic varieties.
We refer the reader to~\cite{Cor07} for the language of b-divisors.

\begin{definition}
Let $X/Z$ be a normal variety projective over $Z$.
The {\em birational category} $\mathcal{C}(X/Z)$ is the category whose elements are normal birational models of $X$ projective over $Z$ and its arrows are birational morphisms over $Z$.
The elements in $\mathcal{C}(X/Z)$ are said to be {\em birational models} of $X$ over $Z$.
\end{definition}

\begin{definition}
Let $X/Z$ be a normal variety projective over $Z$.
A {\em birational $\rr$-divisor} $\bf{M}$ on $\mathcal{C}(X/Z)$ consists of the following data:
\begin{enumerate}
    \item A divisor ${\bf M}_Y$ on $Y$ for every variety $Y \in \mathcal{C}(X/Z)$, and 
    \item an equality ${\bf M}_Y=\pi_* {\bf M}_{Y'}$ for every projective birational morphism $\pi \colon Y'\rightarrow Y$ in $\mathcal{C}(X/Z)$.
\end{enumerate}
Here, $\pi_*$ is the push-forward of $\mathbb{R}$-Weil divisors.
The divisor ${\bf M}_Y$ is called the {\em trace} of ${\bf M}$ on $Y$.
In the case that the birational class is clear from the context, we will say that ${\bf M}$ is a {\em birational divisor} on $X$.
We will write b-divisor instead of birational divisor to shorten the notation.
The trivial b-divisor is the b-divisor with trace $0$ on each model $Y\in \mathcal{C}(X/Z)$.
In what follows, instead of birational $\rr$-divisor ${\bf M}$, we may just write b-divisor ${\bf M}$, for simplicity.
\end{definition}

\begin{definition}
Let $X/Z$ be a normal variety projective over $Z$.
Let $D$ be an $\rr$-Cartier divisor on $X$.
It defines a b-divisor ${\bf M}$ as follows: 
For each projective birational map $Y\dashrightarrow X$ over $Z$, we resolve the map obtaining two projective birational morphisms $\pi' \colon Y'\rightarrow Y$ and
$\pi \colon Y'\rightarrow X$. Then, we can define 
${\bf M}_{Y}:={\pi'}_* \pi^*(D)$.
The above b-divisor is called the {\em $\rr$-Cartier b-divisor} induced by $D$.
It is usually denoted by $\overline{D}$.
\end{definition}

\begin{definition}\label{def:b-nef}
Two b-divisors on $X/Z$ are said to be equal if they have the same trace on every model.
Let ${\bf M}$ be a b-divisor on $X$.
The b-divisor ${\bf M}$ is said to be {\em $\rr$-Cartier} if it equals
$\overline{D}$ for some $\rr$-Cartier $\rr$-divisor $D$ on a birational model $Y$ of $X$ over $Z$.
The b-divisor ${\bf M}$ is said to be a {\em $b$-nef divisor over $Z$} if moreover $D$ is nef on $Y$ over $Z$.
We say that $p {\bf M}$ is {\em Cartier}, or equivalently, that ${\bf M}$ has {\em Cartier index $p$} where it descends if ${\bf M}=\overline{D}$ and $pD$ is Cartier.

Let $U\subset X$ be a non-empty open subset.
Let ${\bf M}$ be a b-divisor on $X$.
We say that ${\bf M}$ {\em descends} onto $U$ over $Z$ if
the restriction to $U$ of its trace on $X$ is $\rr$-Cartier
and for every projective birational morphism $\pi\colon Y\rightarrow X$, we have that
\[
{\bf M}_Y|_{U_Y}=\pi^*({\bf M}_X)|_{U_Y},
\]
where $U_Y:=\pi^{-1}(U)$.
If $U=X$, then we just say that ${\bf M}$ descends on $X$ over $Z$.
\end{definition}

The following definition is adopted from~\cite{HL18}*{Definition 2.13}.
Its importance relies on the fact that it allows compressing a technical assumption that appears often in the article.

\begin{definition}
Let ${\bf M}$ be a b-nef divisor on $X/Z$. 
Let $X'\rightarrow X$ be a model where ${\bf M}$ descends.
We denote by $M'$ the trace of ${\bf M}$ on $X'$, i.e., we have that
$M'={\bf M}_{X'}$.
We say that ${\bf M}$ is a {\em nef $\qq$-Cartier b-nef divisor} if we can write
\[
M' \equiv_Z \sum_j \mu_j M'_j
\]
where each $M'_j$ is a $\qq$-Cartier nef divisor over $Z$ and 
each $\mu_j$ is a nonnegative real number.
We may say that ${\bf M}$ is {\em NQC} for short.
\end{definition}

\begin{definition}\label{genpair}
A {\em generalized sub-pair} is a triple $(X/Z,B+M)$ where $X/Z$ is a normal algebraic variety projective over $Z$, 
$K_X+B+M$ is $\rr$-Cartier, $B$ is an $\rr$-divisor on $X$, and $M$ is the trace on $X$
of a b-nef divisor $\mathbf{M}$ over $Z$.
This means that $M=\mathbf{M}_X$.
A generalized sub-pair is called a {\em generalized pair} if $B$ is an effective divisor.
We say that $B$ is the {\em boundary part} and $M$ is the {\em nef part} of the generalized pair $(X/Z,B+M)$.
We call the sum $B+M$ a {\em generalized boundary}.
We say that the generalized pair
$(X/Z,B+M)$ is {\em NQC} if the b-nef divisor {\bf M} is NQC.
A generalized pair $(X/Z,B+M)$ is said to be {\em $\qq$-factorial} if its underlying algebraic variety $X$ is $\qq$-factorial.
\end{definition}

\begin{notation}
Let $(X/Z,B+M)$ be a generalized pair.
We denote by $X'$ a birational model of $X$ over $Z$ where ${\bf M}$ descends.
We have a projective birational morphism $X'\rightarrow X$. 
We denote by $M'$ the trace of ${\bf M}$ on $X'$, i.e., $M'={\bf M}_{X'}$. 
Whenever we write $X'$ for a generalized pair $(X/Z,B+M)$, we mean a model where ${\bf M}$ descends 
and admits a projective birational morphism to $X$. 
Analogously, whenever we write $M'$, we mean the trace of the b-nef divisor on the model $X'$.
Observe that $X'$ can always be replaced by higher models if needed.

If $Y$ is a birational model of $X$ over $Z$, then we set $M_Y:={\bf M}_Y$, to avoid the excessive use of ${\bf M}$.
If the birational model of $X$ is denoted by some subscript or superscript, for instance 
$X',X^+,$ or $X_i$, then we will set
$M':={\bf M}_{X'}, M^+:={\bf M}_{X^+}$, or $M_i:={\bf M}_{X_i}$, respectively. 
This is done to avoid the excessive use of double subscripts.
This notation is concordant with such of $M'$ in the previous paragraph.
To avoid confusion,
we will often remind the reader of this notation when it is used.
\end{notation}

\begin{definition}\label{logresolutiongeneralized}
Given a generalized pair $(X/Z,B+M)$ and a projective birational morphism $Y \rightarrow X$ over $Z$, 
we say that $f$ is a {\em log resolution} of the generalized pair if the following conditions are satisfied:
\begin{enumerate}
    \item $Y$ is a smooth variety,
    \item the exceptional locus of $Y\rightarrow X$ is purely divisorial,
    \item the strict transform of $B$ plus the reduced exceptional divisor has simple normal crossing, and 
    \item $Y\rightarrow X$ factors through a model where ${\bf M}$ descends.
\end{enumerate}
In particular, $M_{Y}:={\bf M}_Y$ is a nef divisor over $Z$
whenever $Y \rightarrow X$ is a log resolution of the generalized pair.
When we work with log resolutions of generalized pairs, we may recall that $M_Y$ is nef over $Z$ on such model, 
i.e., that ${\bf M}$ descends on $Y$.
\end{definition}

\begin{definition}\label{def:gen-sing}
Let $(X/Z,B+M)$ be a generalized pair, let $\pi \colon Y\rightarrow X$ be a projective birational morphism over $Z$, and let $E$ be a prime divisor on $Y$. 
We can write 
\[
K_Y+B_Y+M_Y=\pi^*(K_X+B+M).
\]
Note that $B_Y$ is uniquely defined by the above equality, since 
$M_Y$ only depends on the b-divisor ${\bf M}$.
The {\em generalized log discrepancy} of the generalized pair $(X/Z,B+M)$ at $E$ is 
\[
a_E(X/Z,B+M)=1-{\rm coeff}_E(B_Y).
\]
We say that $E$ has generalized log discrepancy $a_E(X/Z,B+M)$ with respect to $(X/Z,B+M)$.
We may write {\em gld} instead of generalized log discrepancy to shorten the notation.
Given $\epsilon \in (0,1)$, we say that the generalized pair $(X/Z,B+M)$ is {\em generalized $\epsilon$-Kawamata log terminal} (resp. {\em generalized log canonical}) 
if all its generalized log discrepancies are greater than $\epsilon$ (resp. nonnegative).
We say that the generalized pair is 
{\em generalized klt} if it is generalized $0$-Kawamata log terminal.
As usual, we may write {\rm gklt} (resp. {\rm glc}) to abbreviate generalized Kawamata log terminal (resp. generalized log canonical).
We say that $(X/Z,B+M)$ is {\em generalized canonical} if all its generalized log discrepancies are at least $1$.

Let $(X/Z,B+M)$ be a generalized pair.
If the generalized log discrepancy at $E$ is non-positive (resp. zero), we say that $E$ is a {\em generalized non-klt place} (resp. {\em generalized log canonical place}) of the generalized pair.
Furthermore, we say that $c_E(X)$ is a {\em generalized non-klt center} (resp. {\em generalized log canonical center}) of the generalized pair.
The union of all the generalized non-klt centers of a generalized pair $(X/Z,B+M)$ is the {\em generalized non-klt locus}.
The {\em generalized log canonical locus} of a generalized pair is the biggest open subset on which $(X/Z,B+M)$ has generalized log canonical singularities.
\end{definition}

\begin{definition}
A divisorial valuation $E$ over the generalized pair $(X/Z,B+M)$ is called {\em terminal} if $a_E(X/Z,B+M)$ is strictly greater than $1$.
Otherwise, we say that $E$ is {\em non-terminal}.
A generalized pair $(X/Z,B+M)$ is said to be {\em generalized terminal} if all its exceptional divisorial valuations are terminal with respect to $(X/Z,B+M)$.
Note that for $(X/Z,B+M)$ to be terminal, we only check exceptional valuations, because $X$ itself carries infinitely many valuations with log discrepancy equal to one.
\end{definition}

\begin{remark}
If ${\bf M}$ is the trivial b-divisor in the above definitions, then $(X/Z,B)$ is a {\em  pair} in the usual sense of~\cite{KM98}.
Conversely, every pair can be considered as a generalized pair with trivial nef part.
If we work with pairs, we will drop the word {\em generalized} from the above definitions.
\end{remark}

The following lemma is a straightforward consequence of the negativity lemma.

\begin{lemma}\label{lem:mono-ld}
Let $(X/Z,B+M)$ be a generalized pair.
Assume that $X$ is $\qq$-factorial.
Let $E$ be a divisorial valuation over $X$.
Then, we have that $a_E(X/Z,B+M)\leq a_E(X/Z,B) \leq a_E(X/Z)$.
\end{lemma}

\begin{definition}\label{def:FT}
A contraction $X\rightarrow Z$ is said to be 
of {\em Fano type} if there exists a boundary $B$ on $X$ 
such that the following conditions are satisfied:
\begin{enumerate}
    \item $(X,B)$ has klt singularities, and
    \item $-(K_X+B)$ is nef and big over $Z$.
\end{enumerate}
\end{definition}

We recall that, if $X\rightarrow Z$ is a Fano type morphism,
then any MMP of any divisor on $X$ relative to $Z$ terminates
(see, e.g.,~\cite{BCHM10}*{Corollary 1.3.1}).
The following proposition is well-known to the experts
(see, e.g.,~\cite{Bir19}*{2.13.(7)}).

\begin{proposition}\label{prop:FT}
Let $X\rightarrow Z$ be a Fano type morphism.
Let $(X,B+M)$ be a generalized log canonical pair
with $-(K_X+B+M)$ nef over $Z$.
Let $X'\rightarrow X$ be a projective birational morphism.
Assume that all the prime divisors on $X'$ exceptional over $X$
have generalized log discrepancy in the interval $[0,1)$ with respect to $(X,B+M)$.
Then, the morphism $X'\rightarrow Z$ is of Fano type.
\end{proposition}

\subsection{Quasi-flips for generalized pairs}
In this subsection, we recall the definition of quasi-flips for generalized pairs.

\begin{definition}\label{qfgen}
Let $(X/Z,B+M)$ be a generalized pair. A birational contraction $\phi \colon X \rightarrow W$ over $Z$ is said to be a {\em weak contraction} for
the generalized pair if $-(K_X+B+M)$ is nef over $W$.
If moreover $-(K_X+B+M)$ is ample over $W$,
then say that $\phi$ is an {\em ample weak contraction}.
A {\em quasi-flip} of $\phi$ is a birational map $\pi \colon X \dashrightarrow X^+$
with a birational contraction $\phi^+ \colon X^+ \rightarrow W$
such that the following conditions hold: 
\begin{enumerate}
\item The triple $(X^+/Z, B^{+}+ M^{+})$ is generalized log canonical, 
\item the $\rr$-Cartier $\rr$-divisor $K_{X^+}+B^{+}+M^{+}$ is nef over $W$, and
\item the inequality $\phi^{+}_* B^{+} \leq \phi_*B$ of effective $\rr$-divisors on $W$ holds.
\end{enumerate}
Here, $M^+$ is the trace of ${\bf M}$ on $X^+$.
As usual, the morphism $\phi$ is called the {\em flipping contraction}, and the morphism $\phi^+$ is called the {\em flipped contraction}.
The {\em flipping locus} is the exceptional locus of $\phi$
while the {\em flipped locus}
is the exceptional locus of $\phi^+$.
We say that the quasi-flip $\pi$ is {\em weak} if both $\phi$ and $\phi^+$ are small morphisms,
and that the quasi-flip $\pi$ is {\em ample} if both $-(K_X+B+M)$ and $(K_{X^{+}}+B^{+}+M^{+})$ are ample over $W$.
A {\em flip} for a generalized log canonical pair is a weak ample quasi-flip such that both $\phi$ and $\phi^+$ have relative Picard rank one.
Furthermore, in a flip, we assume that $\phi^+_* B^+ =\phi_* B$.
\end{definition}

\begin{example}\label{ex:ample-quasi-flip}
In this example, we show that flips for pairs $\pi \colon (X,V)\dashrightarrow (X^+,V^+)$ 
may induced ample quasi-flips $\pi_V \colon V \dashrightarrow V^+$ along the boundary divisor. This example enlightens the importance of studying ample quasi-flips.
We use the notation from toric geometry (see, e.g.,~\cite{CLS}). The example is based on the work on toric terminal $3$-fold flips due to Fujino, Sato, Takano, and Uehara (see~\cite{FSTU}).
Let $\sigma\subset \qq^3$ be the rational polyhedral cone spanned by
$\{e_1,e_2,e_3,v\}$,
where $v=(1,1,-r)$. Here $r\geq 1$. Then, the associated toric singularity $W=X(\sigma)$ 
is a non-$\qq$-factorial toric singularity. 
We consider two fans $\Sigma_1$ and $\Sigma_2$ such that 
$\Sigma_1(1)=\Sigma_2(1)=\sigma(1)$. $\Sigma_1$ is defined to be the fan whose cones are all the sub-cones of the $3$-dimensional cones $\langle e_1,e_2,v\rangle$ and $\langle e_1,e_2,e_3\rangle$.
$\Sigma_2$ is defined to be the fan whose cones are all the sub-cones of the $3$-diomensional cones $\langle e_1,e_3,v\rangle$ and $\langle e_2,e_3,v\rangle$.
The varieties $X:=X(\Sigma_1)$ and $X^+=X(\Sigma_2)$ are projective over $W$ and we have a small birational map
\begin{equation}\label{eq:flip}
\xymatrix{
X \ar@{-->}[rr]^-{\pi}\ar[rd]_-{\phi} &  & X^+ \ar[ld]^-{\phi^+} \\
 &  W &
}
\end{equation}
For $r=1$, the small birational map $\pi\colon X\dashrightarrow X^+$ in~\eqref{eq:flip} is the well-known Atiyah flop. 
For $r\geq 2$, the small birational map $\pi\colon X\dashrightarrow X^+$ in~\eqref{eq:flip} is a $K_X$-flip
and we have $K_X\cdot C=-1+\frac{1}{r}$ and $K_{X^+}\cdot C^+=r-1$ where $C$
and $C^+$ are the flipping and flipped curve, respectively.
Let $V$ be the $\qq$-Cartier divisor on $X$ corresponding to the ray $v$. 
Let $V^+$ be the strict transform of $V$ in $X^+$. 
Then, we have $(K_X+V)\cdot C = 1-2/r$ and $(K_{X^+}+V^+)\cdot C =2-r$. 
Thus, the birational map $\pi$ is a flip for the plt pair 
$(X,V)$ provided that $r\geq 3$. In this case, $V$ has a unique singular point along its intersection with $C$ which is the normal cone over a rational curve of degree $r$. 
The morphism $\phi$ maps $V$ isomorphic ally onto its image $\phi(V)$ in $W$. 
We have an induced ample quasi-flip 
\begin{equation}\label{eq:qflip}
\xymatrix{
V \ar@{-->}[rr]^-{\pi_V}\ar[rd]_-{\phi_V} &  & V^+ \ar[ld]^-{\phi_V^+} \\
 &  \phi(V) &
}
\end{equation}
where $\phi_V$ is the identity and $\phi_V^+$ is the minimal resolution.
Note that ${\phi_V^+}^*(K_{\phi(V)})=K_{V^+}+(1-2/r)C^+$.
Thus, the birational map $\pi_V$ can be regarded as taking the minimal resolution, 
pulling back the canonical, and then decreasing the coefficients along the exceptional curve from $1-2/r$ to $0$. 
If $r=2$, then $\pi_V$ in~\eqref{eq:qflip} is the inverse morphism of a crepant resolution, and $\pi_V$ is a quasi-flop.
\end{example}

\begin{definition}
A sequence of quasi-flips for a generalized log canonical pair is said to be {\em under the set $\Lambda$}
if the coefficients of the boundary parts belong to $\Lambda$.
We say that a sequence of quasi-flips for a generalized log canonical pair is {\em under a set satisfying the descending chain condition} (or {\em under a DCC set} for short) if $\Lambda$ satisfies the descending chain condition.
\end{definition}

\begin{definition}\label{qflopgen}
We say that a quasi-flip $\pi$ is a {\em quasi-flop} if both $\pi$ and $\pi^{-1}$ are quasi-flips.
In this case, the flipping contraction (resp. flipped contraction) is called the {\em flopping contraction} (resp. {\em flopped contraction})
and the flipping locus (resp. flipped locus) is called the {\em flopping locus} (resp. {\em flopped locus}).
\end{definition}

\begin{definition}\label{definitionstrict}
Let $\pi \colon X \dashrightarrow X^+$ be a quasi-flip for a generalized pair $(X/Z,B+M)$.
Let $\phi$ be the flipping contraction and let $\phi^+$ be the flipped contraction.
We define the {\em non-flopping locus} to be the smallest Zariski closed subset $N$ of $W$ so that
$\pi$ is a quasi-flop over $W \setminus N$.
The non-flopping locus is closed due to Proposition~\ref{monotonicity} below.
We say that a quasi-flip is {\em strict} if the subvariety $N$ is non-empty.
\end{definition}

\begin{definition}
Let $(X/Z,B+M)$ be a generalized pair.
An ample weak contraction $\phi \colon X\rightarrow W$ over $Z$, 
of relative Picard rank one, such that
$K_W+\phi_* B+ M_W$ is $\rr$-Cartier is called a 
{\em $(K_X+B+M)$-divisorial contraction} over $Z$.
Indeed, the $\rr$-Cartier condition of the divisor
$K_W+\phi_* B+ M_W$ and the relative ampleness of $K_X+B+M$ imply that the exceptional locus
of the morphism $\phi \colon X\rightarrow W$ is purely divisorial.

A {\em minimal model program} for $(X/Z,B+M)$ over $Z$, also written a $(K_X+B+M)$-MMP over $Z$, 
is a sequence of flips and divisorial contractions over $Z$.
\end{definition}

\begin{definition}\label{kltflip}
A quasi-flip for a generalized pair 
is said to be {\em glc} if its flipping locus (and therefore its flipped locus) does not intersect the generalized non-glc loci of the generalized pair.
A quasi-flip for a generalized log canonical pair is said to be {\em gklt} if its flipping locus (and therefore its flipped locus) does not intersect the generalized non-klt loci of the generalized pair.
A quasi-flip for a generalized log canonical pair is said to be {\em generalized terminal} if its flipping 
and flipped locus do not contain the center of a generalized non-terminal valuation.
\end{definition}

The following proposition is well-known for pairs, see for example ~\cite{Sho04}.
We give a proof in the case of generalized pairs for the sake of completeness.

\begin{proposition}\label{monotonicity}
Let $\pi \colon X \dashrightarrow X^+$ be a quasi-flip
for the generalized pairs $(X/Z,B+M)$ and $(X^+/Z,B^+ + M^+)$. Let $\phi \colon X\rightarrow W$ be the flipping contraction.
Let $E$ be a prime divisor over $X$.
Then, we have that
\[
a_E(X/Z,B+M) \leq a_E(X^+/Z,B^+ + M^+)
\]
and such inequality is strict if the center of $E$ on $W$ is contained in the non-flopping locus. The non-flopping locus is always closed. 
Furthermore, if $\phi_*B =\phi^+_*B^+$, then the non-flopping locus of an ample quasi-flip is the union of the images of the flipping and flipped locus on $W$.
\end{proposition}

\begin{proof}
We can resolve the birational map $\pi$ and obtain two projective birational morphisms 
$p\colon Y\rightarrow X$ and $q\colon Y\rightarrow X^+$.
We may assume that $p$ is a log resolution of the generalized pair $(X/Z,B+M)$.
In particular, ${\bf M}$ descends onto $Y$.
Note that 
\[
a_E(X^+/Z,B^++M^+)-a_E(X/Z,B+M)=
{\rm coeff}_E(p^*(K_X+B+M)-q^*(K_{X^+}+B^++M^+)).
\]
Furthermore, the divisor 
\[
F:=p^*(K_X+B+M)-q^*(K_{X^+}+B^++M^+)
\]
is anti-nef over $W$.
Moreover, its push-forward to $W$ equals 
$\phi_*B-\phi^+_* B^+$, which is effective by the definition of quasi-flip.
By the negativity lemma~\cite{KM98}*{Proposition 3.39}, we conclude that $F\geq 0$.
This proves the first statement.

Let $N:=\phi(p(F))$.
By the negativity lemma~\cite{KM98}*{Proposition 3.39}, we know that 
the support of $F$ contains every fiber of 
$\phi \circ p$ over any point contained in $N$.
In particular, we deduce that
\[
a_E(X/Z,B+M)<a_E(X^+/Z,B^++M^+)
\] 
for every divisor $E$ whose center in $W$ is contained in $N$.
Indeed, in this case, $E$ would be contained in the support of $F$.

We claim that $N$ contains the non-flopping locus.
It suffices to prove that the complement of $N$ is contained in the flopping locus.
Let $w\in W$ be a point in the complement of $N$.
Then, for every curve $C\subseteq \phi^{-1}(w)$, by the projection formula, we have that 
\[
(K_X+B+M)\cdot C = 0.
\]
Thus, $\pi$ is a quasi-flop over the complement of $N$. Conversely, if $p\in N $, then there is a curve $C\subset p^{-1}(\phi^{-1}(p))$
such that $F\cdot C<0$ (see, e.g.,~\cite[Lemma 2.10]{Lai11}). Therefore, we conclude that
$p^*(K_X+B+M)\cdot C < q^*(K_{X^+}+B^+ + M^+)\cdot C$.
This implies that $p$ is contained in the non-flopping locus and so the non-flopping locus equals $N$ which is a closed subset.  

Finally, assume that $\pi$ is an ample quasi-flip.
We argue that $\phi({\rm Ex}(\phi))\cup \phi^+({\rm Ex}(\phi^+)) = N$.
If $p\in \phi({\rm Ex}(\phi))$, then we can find a curve 
$C\in \phi^{-1}(p)$ and $(K_X+B+M)\cdot C <0$ so $p$ is not in the flopping locus. Proceeding analogously for $\phi^+({\rm Ex}(\phi^+))$, we conclude that 
$\phi({\rm Ex}(\phi))\cup \phi^+({\rm Ex}(\phi^+)) \subset N$.
If $p$ is not in $\phi({\rm Ex}(\phi))\cup \phi^+({\rm Ex}(\phi^+))$, then both $\phi$ and $\phi^+$ are isomorphisms in a neighborhood of $p$, and so
$\pi$ is an isomorphism in a neighborhood of $p$. 
As we are assuming that $\phi_*B=\phi^+_*B^+$, 
we conclude that $p\in W$ is in the flopping locus. 
\end{proof}

\begin{remark}\label{rem} 
The previous proposition implies that given an ample quasi-flip
$\pi\colon (X/Z,B+M)\dashrightarrow (X^+/Z,B^+M^+)$,
if the log discrepancies of $(X/Z,B+M)$ equal such of $(X^+/Z,B^+ +M^+)$,
then $\pi$ is an isomorphism. 
In particular, a strict ample quasi-flip must change at least one 
log discrepancy.  
\end{remark}

\subsection{Minimal models}
In this subsection, we recall the standard definition of minimal models.
First, we recall the definition of generalized divisorially log terminal pairs.

\begin{definition}
Let $(X/Z,B+M)$ be a generalized pair.
We say that $(X/Z,B+M)$ is generalized divisorially log terminal if there exists an open set $U\subset X$ satisfying the following conditions:
\begin{enumerate}
    \item the coefficients of $B$ are $\leq 1$, 
    \item $U$ is smooth and $B|_U$ has simple normal crossing support, 
    \item every generalized non-klt center of $(X/Z,B+M)$ intersects $U$ non-trivially and is given by strata of $\lfloor B \rfloor$ i.e., it is given by intersection of prime components of $B$ with coefficient one, and 
    \item the b-divisor ${\bf M}$ descends over the generic point of each generalized non-klt center of $(X/Z,B+M)$.
\end{enumerate}
To shorten notation, we say that $(X/Z,B+M)$ is {\em gdlt}.
\end{definition}

\begin{definition}\label{definitionminimalmodel}
Let $(X/Z,B+M)$ be a $\qq$-factorial gdlt pair.
Assume that $X$ is projective.
Let $\pi \colon X \dashrightarrow Y$ be a birational map over $Z$, to a normal quasi-projective variety $Y$ which is projective over $Z$.
We say that $\pi$ is a {\em minimal model} of $(X/Z,B+M)$ if the following conditions hold:
\begin{enumerate}
    \item $\pi^{-1}$ contracts no divisors,
    \item $Y$ is $\qq$-factorial,
    \item $K_Y+B_Y+M_Y$ is nef over $Z$ where $B_Y:=\pi_*B$, and 
    \item $a_E(X/Z,B+M)<a_E(Y/Z,B_Y+M_Y)$ for any $\pi$-exceptional prime divisor $E\subset X$.
\end{enumerate}
When $\pi$ is clear from the context, we may say that  
$(Y/Z,B_Y+M_Y)$ is a {\em minimal model} of $(X/Z,B+M)$.
Here, as usual $M_Y$ is the trace of ${\bf M}$ on $Y$.
We say that $(Y/Z,B_Y+M_Y)$ is a good minimal model if $K_Y+B_Y+M_Y$ is semi-ample over $Z$.
\end{definition}

\subsection{Generalized log canonical thresholds}\label{ss:glct}
In this subsection, we recall the definition of the generalized log canonical threshold.

\begin{definition}\label{def:glct}
Let $(X/Z,B+M)$ be a generalized log canonical pair.
Let $N$ be an effective divisor on $X$ and ${\bf P}$ a $b$-nef divisor over $Z$.
By abuse of notation, we will denote by $P$ the trace of ${\bf P}$ on $X$.
Assume that $N+P$ is an $\rr$-Cartier divisor on $X$.
Let $U\subset X$ be a non-empty open subset of $X$.
We define the {\em generalized log canonical threshold} of $(X,B+M)$ with respect to $N+P$ over $U$ to be
\[
{\rm glct}((X/Z,B+M),N+P;U):=
\sup\{ \lambda \mid (X/Z,B+\lambda N+(M+\lambda P)) \text{ is generalized log canonical over $U$}\}.
\]
Here, $(X,B+\lambda N+(M+\lambda P))$ is considered to be a generalized pair with boundary part $B+\lambda N$ and nef part $M+\lambda P$.
If $U=X$, then we just call it the generalized log canonical threshold of
$(X,B+M)$ with respect to $N+P$
and denote it by ${\rm glct}((X/Z,B+M);N+P)$.
\end{definition}

\begin{proposition}\label{prop:finiteness}
Let $(X/Z,B+M)$ be a $\qq$-factorial generalized log canonical pair.
Assume that we can write 
$K_X+B+M\equiv_{Z} N+P$, where $N$ is effective and ${\bf P}$ is b-nef over $Z$.
Let $U\subset X$ be a non-empty open subset.
If ${\rm glct}((X/Z,B+M),N+P;U)$ is infinite, then
$N|_U=0$ and ${\bf P}$ descends over $U$.
In particular, $K_X+B+M$ intersects nonnegatively every projective curve
contained in $U$ which contracts on $Z$.
Furthermore, if $U=X$ and ${\rm glct}((X/Z,B+M),N+P)$ is infinite, then
$K_X+B+M$ is nef over $Z$.
\end{proposition}

\begin{proof}
If $N|_U$ is non-zero, then we can find a prime divisor $F_0$ on $X$ which intersects $U$
so that ${\rm coeff}_{F_0}(N)=\lambda>0$.
Then, we have that ${\rm glct}((X/Z,B+M),N+P;U)\leq \lambda^{-1}$,
leading to a contradiction.
Hence, we may assume that $N=0$, 
so we have that $P$ is $\rr$-Cartier.
Assume that $P$ does not descend over $U$.
Let $\pi\colon Y\rightarrow X$ be a projective birational morphism from a model $Y$ on which $P$ descends.
We may assume that $Y$ is $\qq$-factorial.
Denote by $P_Y$ the trace of ${\bf P}$ on $Y$.
Then, we can write $\pi^*P=P_Y+F$, where $F$ is an effective divisor exceptional over $X$.
Since ${\bf P}$ does not descend over $U$, then there is a prime divisor $F_1$ on $Y$ 
so that ${\rm coeff}_{F_1}(F)=\lambda>0$.
A simple computation yields 
\[
{\rm glct}((X/Z,B+M),N+P;U)\leq a_E(X,B+M)\lambda^{-1}.
\]
Thus, we conclude that if the generalized log canonical threshold is infinite, then
$N|_U=0$ and $P$ descends over $U$.

Now, we proceed to prove the second statement.
We prove that if the generalized log canonical threshold is infinite, then
$K_X+B+M$ intersects nonnegatively every projective curve contained in $U$ that is being contracted on $Z$.
Assume that the generalized log canonical threshold is infinite.
By the above paragraph, we have that $N|_U=0$ and ${\bf P}$ descends over $U$.
So, we have that 
\[
(K_X+B+M)|_U \equiv_{Z} P|_U.
\] 
Let $Y$ be a model where ${\bf P}$ descends.
Let $U_Y$ be the preimage of $U$ on $Y$.
Let $C$ be any projective curve contained in $U$ which is contracted on $Z$. 
Let $C_Y$ be a curve on $Y$ that maps surjectively to $C$.
Then, by the projection formula, we have that
\[
(K_X+B+M)\cdot C=
\pi^*(K_X+B+M)|_{U_Y} \cdot C_Y = P_Y|_{U_Y} \cdot C_Y \geq 0.
\]
The last inequality follows from the fact that $P_Y$ is nef over $U$
and $C_Y$ is being contracted on $Z$.

The last statement follows trivially from the above statement
by setting $U=X$.
\end{proof}

\subsection{Diminished base locus}
In this subsection, we recall and prove some statements about the diminished base locus.

\begin{definition} 
Let $X$ be a normal variety, 
$X\rightarrow Z$ a projective contraction, 
and $D$ a divisor on $X$.
We define the {\em relative diminished base locus} of $D$ over $Z$, to be 
\[
{\rm Bs}_{-}(D/Z):=\bigcup_{\lambda >0}{\rm Bs}(D+\lambda A/Z),
\]
where $A$ is an ample divisor over $Z$ and ${\rm Bs}(-/Z)$ is the base locus of the divisor relative over $Z$.
Observe that the relative diminished base locus is a countable union of subvarieties of $X$.
In the case that $Z$ is clear from the context, we may just call it the diminished base locus.
Note that the diminished base locus of $D$ is a proper subset of $X$
if and only if $D$ is pseudo-effective over $Z$.
Furthermore, the diminished base locus of $D$ is empty if
$D$ is nef over $Z$.
\end{definition}

\begin{lemma}\label{lem:dim-conv}
Let $X$ be a normal variety
and $X\rightarrow Z$ be a projective contraction.
Let $D$ and $F$ be pseudo-effective divisors on $X$.
Let $( \epsilon_i)_{i\geq 1}$ be a sequence of real numbers that converges to zero.
Then, we have that ${\rm Bs}_{-}(D/Z) \subset \bigcup_{i=1}^\infty {\rm Bs}(D+\epsilon_i F/Z)$.
\end{lemma}
\begin{proof} 
It suffices to prove that
for each $\lambda>0$, we can find $i$ large enough such that
\[
{\rm Bs}(D+\lambda A/Z)\subset {\rm Bs}(D+\epsilon_i F/Z).
\]
Since $A$ is ample,
we can choose $i$ large enough so that 
${\rm Bs}(\lambda A - \epsilon_i F/Z)=\emptyset$.
Hence, we have that
\[
{\rm Bs}(D+\lambda A/Z) 
= {\rm Bs}((\lambda A -\epsilon_i F)+(D+\epsilon_i F)/Z)
\subset {\rm Bs}(\lambda A-\epsilon_i F/Z)\cup {\rm Bs}(D+\epsilon_i F/Z)
= {\rm Bs}(D+\epsilon_i F/Z).
\]
This concludes the proof.
\end{proof}

\begin{lemma}\label{lem:dim-pull-back}
Let $X$ and $Y$ be normal quasi-projective varieties, both of them projective over $Z$.
Let $\pi\colon Y\rightarrow X$ be a projective birational morphism over $Z$.
Let $M$ be an $\rr$-Cartier divisor on $X$ such that its diminished base locus over $Z$ is disjoint from the proper closed subset $L\subset X$.
Then, the diminished base locus of $\pi^*M$ over $Z$ is disjoint from the proper closed subset $\pi^{-1}(L) \subset Y$.
\end{lemma}

\begin{proof}
Let $y\in Y$ be a point that is not contained in $\pi^{-1}(L)$. 
Then, its image $x=\pi(y)$ on $X$ is not contained in $L$.
We can find a Cartier divisor $A$ on $X$ ample over $Z$. We can find a sequence of effective divisors
$0\leq E_n \sim_{\rr,Z} M+\frac{1}{n} A$ such that 
$x$ is not contained in the support of $E_n$.
Pulling-back to $Y$, we get that
$0\leq \pi^*E_n \sim_{\rr,Z} \pi^*M +
\frac{1}{n}\pi^*A$. Furthermore, $\pi^*E_n$ does not contain $y$ in its support.
Thus, the union
$\cup_{n=1}^{\infty} {\rm Bs}(\pi^*M + \frac{1}{n}\pi^*A)$ is disjoint from $\pi^{-1}(L)$.
By Lemma~\ref{lem:dim-conv}, we conclude that ${\rm Bs}_{-}(\pi^*M)$ is disjoint from $\pi^{-1}(L)$. 
\end{proof}

\begin{lemma}\label{lem:dim-MMP}
Let $(X/Z,B+M)$ be a generalized pair.
Let $X\dashrightarrow X^+$ be a step of a $(K_X+B+M)$-MMP over $Z$.
Let $V\subset {\rm Bs}_{-}(K_X+B+M/Z)$ be an irreducible component 
that is not contained in the exceptional locus of $X\dashrightarrow X^+$.
Then, the strict transform of $V$ on $X^+$ is contained in ${\rm Bs}_{-}(K_{X^+}+B^{+}+M^{+}/Z)$.
Here, $M^+$ is the trace of the b-nef divisor ${\bf M}$ on $X^+$.
\end{lemma}

\begin{proof}
Let $V^+$ be the strict transform of $V$ on $X^+$.
Assume $V^+$ is not contained
in the diminished base locus of $K_{X^+}+B^{+}+M^{+}$ relative to $Z$.
Let $A^+$ be an effective Cartier divisor on $X^+$ ample over $Z$.
For every $n\geq 1$, we can find an effective divisor
$0\leq E^+_n \sim_{\rr,Z} K_{X^+}+B^{+}+M^{+}+\frac{1}{n}A^{+}$ such that 
$V^+$ is not contained in the support of $E^{+}_n$.
Let $E^0_n$ be the rational pull-back of $E^{+}_n$ to $X$.
We denote by $A$ the rational pull-back of $A^{+}$ to $X$.
Note that for $n$ large enough, the birational map $X\dashrightarrow X^+$ is also a step of a 
$(K_X+B+M+\frac{A}{n})$-MMP.
By Proposition~\ref{monotonicity}, we can write 
\[
\phi^*\left(K_{X^{+}}+B^{+}+M^{+}+\frac{A^{+}}{n} \right) = 
K_X+B+M+\frac{A}{n} - F_n,
\] 
where $\phi^*$ denotes the rational pull-back
and $F_n$ is an effective divisor supported on the exceptional locus of $X\dashrightarrow X^+$.
In particular, $F_n$ does not contain $V$ in its support.
On the other hand, we have that 
\[
K_X+B+M+\frac{A}{n} \sim_{\rr,Z} E_n:= E^0_n+F_n \geq 0.
\]
Thus, $V$ is not conatined in $\cup_{i=1}^n {\rm Bs}(K_{X}+B+M+\frac{1}{n}A/Z)$ for $n$ large enough.
By Lemma~\ref{lem:dim-conv}, we conclude that $V$ is not contained in the diminished base locus of $K_{X}+B+M$ relative to $Z$.
This gives a contradiction.
We conclude that $V^+$ is contained in the diminished base locus of $K_{X^{+}}+B^{+}+M^{+}$ relative to $Z$.
\end{proof}

\begin{lemma}\label{lem:dim-small}
Let $X'\rightarrow X$ be a projective birational morphism over $Z$.
Let $M'$ be an $\rr$-Cartier divisor on $X'$
nef over $Z$.
Let $M$ be the push-forward of $M'$ to $X$.
Assume that $X$ is $\qq$-factorial.
Then, the diminished base locus of $M$ over $Z$
has codimension at least two.
\end{lemma}

\begin{proof}
By the negativity lemma~\cite{KM98}*{Proposition 3.39}, we can write $\pi^*M=M'+E'$ where $E'$ is an effective divisor exceptional over $X$. We claim that the diminished base locus of $M$ is contained in 
$\pi({\rm Ex}(\pi))$.
Let $x\in X$ be a point that is contained in the isomorphism locus of $X'\rightarrow X$.
Let $x'\in X'$ be its pre-image on $X'$.
Let $A'$ be a Cartier divisor on $X'$ ample over $Z$.
Then, for every $n\in \zz_{\geq 1}$, we have that 
\[
{\rm Bs}\left(M'+E'+ \frac{1}{n} A'/Z \right) \subset 
{\rm Bs}\left(M'+\frac{1}{2n} A'/Z \right) \cup {\rm Bs}\left(E'+\frac{1}{2n}A'/Z \right)
\subset {\rm Bs}\left(E'+\frac{1}{2n}A'/Z\right) \subset \supp(E').
\]
The second containment holds as
$M'+\frac{1}{2n}A'$ is ample over $Z$,
so its diminished base locus relative to $Z$ is empty.
Thus, for each $n\in \zz_{\geq 1}$, we can find an effective divisor $E'_n \sim_{\rr,Z} M'+E'+\frac{1}{n} A'$ whose support does not contain $x'$.
Let $E_n$ be the push-forward of $E'_n$ to $X$.
Then, we have that the support of $E_n$ does not contain $x$.
Furthermore, we have that $E_n \sim_{\rr,Z} M + \frac{1}{n}A$,
where $A$ is the push-forward of $A'$ on $X$.
Thus, we conclude that $x$ is not contained in $\cup_{n=1}^\infty {\rm Bs}(M+\frac{1}{n} A/Z)$.
Since $x$ is an arbitrary point of $X$ that is not contained in $\pi({\rm Ex}(\pi))$, 
we conclude that $\cup_{n=1}^\infty {\rm Bs}(M+\frac{1}{n}A/Z)\subset \supp(\pi({\rm Ex}(\pi)))$.
By Lemma~\ref{lem:dim-conv}, we conclude that
\[
{\rm Bs}_{-}(M) \subset \cup_{n=1}^\infty {\rm Bs}\left(M+\frac{1}{n}A/Z \right)\subset \supp(\pi({\rm Ex}(\pi)))
\]
Note that $\supp(\pi({\rm Ex}(\pi)))$ has codimension at least two. This proves the lemma.
\end{proof}

\begin{lemma}\label{dim:movable-cod-1}
Let $(X/Z,B+M)$ be a generalized pair.
Assume $X$ is $\qq$-factorial.
Let $C$ be a curve on $X$ mapping to a closed point on $Z$.
Assume $C$ that is movable in codimension one over $Z$.
Then, we have that $M\cdot C\geq 0$.
\end{lemma}

\begin{proof}
Let $\pi\colon X'\rightarrow X$ be a log resolution of the generalized pair $(X/Z,B+M)$ where $M$ descends.
By the negativity lemma~\cite{KM98}*{Proposition 3.39}, we have that $\pi^*M=M'+E$ where $E$ is an effective divisor exceptional over $X$.
Since $C$ is movable in codimension one, then it is numerically equivalent to a curve that is not contained in the image of ${\rm Ex}(\pi)$ on $X$.
We may assume $C$ itself has this property.
Let $C'$ be the strict transform of $C$ on $X'$.
Then, we can compute 
$M\cdot C = \pi^*M \cdot C' = (M'+E)\cdot C' \geq 0$.
The last inequality holds 
since $M'$ is nef over $Z$ and $E$ is effective and does not contain $C'$
in its support.
\end{proof}

\begin{lemma}\label{lem:blow-up-M}
Let $(X/Z,B+M)$ be a generalized pair.
Assume that $X$ is $\qq$-factorial.
Let $C\subset X$ be a curve such that $M\cdot C<0$
and $C$ maps to a point on $Z$.
Let $\pi\colon Y\rightarrow X$ be a projective birational morphism that extracts a unique
divisor $E$ whose center on $X$ is $C$.
Then, we have that $a_E(X/Z,B+M)<a_E(X/Z,B)$.
\end{lemma}

\begin{proof}
By the negativity lemma~\cite{KM98}*{Proposition 3.39}, we can write $\pi^*M=M_Y+aE$, where $a\geq 0$.
It suffices to prove that $a>0$.
Assume it is not. Then, we have that $\pi^*M=M_Y$.
For every point $e\in E$, we can find a curve $C_Y$ passing through $e$ whose image on $X$ is $C$.
Note that the image of $C_Y$ on $Z$ is a point.
By the projection formula, we have that $\pi^*M\cdot C_Y=M_Y\cdot C_Y <0$.
Thus, we conclude that every point $e$ of the divisor $E$ is contained in the diminished base locus of $M_Y$.
Indeed, for every point $e\in E$, we may find a curve $C_Y$ containing $e$ for which $M_Y\cdot C_Y<0$
and then $e\in C_Y \subset {\rm Bs}_{-}(M_Y/Z)$.
We conclude that every point $e\in E$ is contained
in ${\rm Bs}_{-}(M_Y/Z)$.
This means that $E\subset {\rm Bs}_{-}(M_Y/Z)$.
This contradicts Lemma~\ref{lem:dim-small}.
Thus, we have that $a>0$.
\end{proof}

\subsection{Generalized dlt modifications}
In this subsection, we prove some properties of
generalized dlt pairs and recall the concept of
$\qq$-factorial generalized dlt modification.
This is a straightforward generalization of $\qq$-factorial dlt modifications of log pairs (see, e.g.,~\cite{KK10}*{Theorem 3.1}).

\begin{definition}
\label{def:gdlt}
Let $(X/Z,B+M)$ be a generalized pair.
Let $\Gamma$ be the boundary divisor on $X$ obtained from $B$ 
by decreasing to one all its coefficients larger than one. 
We say that a projective birational morphism $\pi\colon Y\rightarrow X$ over $Z$ is a {\em generalized dlt modification} of $(X/Z,B+M)$ if it satisfies the following conditions:
\begin{enumerate}
    \item It only extracts divisors with non-positive generalized log discrepancy with respect to $(X,B+M)$, 
    \item the generalized pair $(Y/Z,\pi^{-1}_*\Gamma+E+M_Y)$ is generalized dlt, where $E$ is the reduced exceptional divisor, and 
    \item $K_Y+\pi^{-1}_*\Gamma+E+M_Y$ is nef over $X$.
\end{enumerate}
We say that the generalized dlt modification is $\qq$-factorial if $Y$ is a $\qq$-factorial variety.
\end{definition}

The following theorem is known as the existence of generalized dlt modifications.
In the case of generalized lc pairs, it follows by~\cite{BZ16}*{Lemma 4.5}.
In the glc case this statement is also proved in~\cite{Bir19}*{2.13.(3)}
and~\cite{HL18}*{Proposition 3.9}.
In the general case, it is proved by Filipazzi and Svaldi in~\cite{FS20}*{Theorem 2.9}.
The general statement was also included in Filipazzi's Ph.D. thesis~\cite{Fil19}*{Theorem 5 Chapter 4}.

\begin{theorem}\label{thm:gen-dlt-mod-4}
Let $(X/Z,B+M)$ be a generalized pair.
Then, $(X/Z,B+M)$ admits a 
$\qq$-factorial gdlt modification.
\end{theorem}

\begin{corollary}\label{cor:finiteness-glcc}
The number of generalized log canonical centers of a generalized log canonical pair is finite.
\end{corollary}

\begin{proof}
Let $(X/Z,B+M)$ be a generalized log canonical pair.
By Theorem~\ref{thm:gen-dlt-mod-4}, we can construct a $\qq$-factorial 
generalized dlt modification $(Y/Z,B_Y+M_Y)$ of $(X/Z,B+M)$.
By definition, the generalized log canonical centers of $(X/Z,B+M)$ are exactly the images on $X$ of the generalized log canonical centers of $(Y/Z,B_Y+M_Y)$.
On the other hand, the gdlt pair $(Y/Z,B_Y+M_Y)$ has finitely many generalized log canonical centers that correspond to the strata of $\lfloor B_Y\rfloor$. This proves the corollary.
\end{proof}

The following lemma is proved in~\cite{Bir19}*{Section 2.13.(2)}.

\begin{lemma}\label{lem:stay-gdlt}
Let $(X/Z,B+M)$ be a generalized dlt pair.
Assume we run a $(K_X+B+M)$-MMP over $Z$.
Then, every model in this minimal model program is generalized dlt.
\end{lemma}

\subsection{Generalized terminalization and small $\qq$-factorialization}

In this subsection, we prove that generalized klt pairs have a generalized $\qq$-factorial terminalization
and a generalized small $\qq$-factorialization. 

\begin{definition}
A {\em generalized $\qq$-factorial terminalization} of a generalized klt pair $(X/Z,B+M)$ is a $\qq$-factorial generalized terminal pair $(Y/Z,B_Y+M_Y)$ together with a projective birational morphism $\pi \colon Y \rightarrow X$ over $Z$
such that $K_Y+B_Y+M_Y=\pi^*(K_X+B+M)$. 
\end{definition}

\begin{definition}
A {\em generalized small $\qq$-factorialization} of a generalized klt pair $(X/Z,B+M)$ is a $\qq$-factorial generalized klt pair $(Y/Z,B_Y+M_Y)$
together with a small projective birational morphism $\pi \colon Y \rightarrow X$ over $Z$ such that $K_Y+B_Y+M_Y=\pi^*(K_X+B+M)$.
\end{definition}

The following proposition follows from~\cite{BZ16}*{Lemma 4.5}.

\begin{proposition}\label{prop:extraction-f}
Let $(X/Z,B+M)$ be a generalized klt pair.
Let $\mathcal{F}$ be a finite set of exceptional divisorial valuations over $X$ so that $a_E(X/Z,B+M)\in (0,1]$ for every $E\in \mathcal{F}$.
Then, there exists a projective birational morphism $\pi\colon Y\rightarrow X$ over $Z$ so that $Y$ is $\qq$-factorial and the exceptional divisors of $\pi$
correspond to the valuations in $\mathcal{F}$.
\end{proposition}

\begin{lemma}\label{lem:finite-klt} 
Let $(X/Z,B+M)$ be a generalized $\epsilon$-klt pair.
The set of exceptional divisorial valuations $E$ over $X$ with
$a_E(X/Z,B+M)\in (0,1+\epsilon)$ is finite.
\end{lemma}

\begin{proof}
Let $(X'/Z,B'+M')$ be a log resolution of $(X/Z,B+M)$ where $M$ descends.
Note that for any prime exceptional divisor $E$ over $X'$, 
we have that
\[
a_E(X'/Z,B'+M') = a_E(X'/Z,B').
\]
To conclude, we apply~\cite{KM98}*{Proposition 2.36} to the sub-pair $(X'/Z,B')$.
\end{proof}

\begin{lemma}\label{lem:finite-gter} 
Let $(X/Z,B+M)$ be a generalized terminal pair.
Let $b\in [0,1]$ so that the coefficients of $B$ are contained in $[0,b]$.
The set of exceptional divisorial valuations $E$ over $X$ with $a_E(X/Z,B+M)\in (0,2-b)$
is finite.
\end{lemma}

\begin{proof}
Let $(X'/Z,B'+M')$ be a log resolution of $(X/Z,B+M)$ where $M$ descends.
Note that for any prime exceptional divisor $E$ over $X'$, 
we have that
\[
a_E(X'/Z,B'+M') = a_E(X'/Z,B').
\]
To conclude, we apply~\cite{KM98}*{Proposition 2.36} to the sub-pair $(X'/Z,B')$.
\end{proof}

The following propositions are straightforward applications of Proposition~\ref{prop:extraction-f}.
These statements are also proved in~\cite{BZ16}*{Lemmata 4.5 and 4.6}. 

\begin{proposition}\label{existence-term}
A generalized klt pair $(X/Z,B+M)$ has a generalized $\qq$-factorial terminalization.
\end{proposition}

\begin{proof}
By Proposition~\ref{lem:finite-klt}, we know that $(X/Z,B+M)$ 
has finitely many exceptional divisorial valuations $E$ with 
$a_E(X/Z,B+M)\in (0,1]$.
We call $\mathcal{F}$ the finite set of such divisorial valuations.
Then, the statement follows from Proposition~\ref{prop:extraction-f}
applied to $\mathcal{F}$.
\end{proof}

\begin{proposition}\label{existence-small-Q}
A generalized klt pair $(X/Z,B+M)$ admits a generalized small $\qq$-factorialization.
\end{proposition}

\begin{proof}
This follows from Proposition~\ref{prop:extraction-f} applied to the set
$\mathcal{F}=\emptyset$.
\end{proof}

\section{Termination of gklt 3-fold quasi-flips}\label{S2}

In this section, we prove the termination of quasi-flips
for generalized klt $3$-folds with boundary coefficients in a DCC set.
The DCC condition on the coefficients is technical.
However, in the application of the lemmata of this section, there will be criteria to guarantee that the given MMP is under a DCC set.
We start by proving that any such sequence will terminate in codimension one, in the sense of the following definition.

\begin{definition}
We say that a sequence of birational transformations {\em terminates in codimension one}
if after finitely many birational transformations all maps are isomorphisms in codimension one.
In particular, a sequence of quasi-flips terminates in codimension one, if after finitely many quasi-flips,
all quasi-flips are weak.	
\end{definition}

\begin{lemma}~\label{tercod1}
Let $(X/Z,B+M)$ be a generalized log canonical pair.
Any sequence of strict ample klt quasi-flips for $(X/Z,B+M)$, under a DCC set, terminates in codimension one.
\end{lemma}

\begin{proof}
We claim that any prime divisor that is extracted in the sequence of quasi-flips has generalized log discrepancy at most one. 
Indeed, the generalized log discrepancy at the generic point of a prime component
of the boundary part with coefficient $0 < b_j < 1$ is $1-b_j$.
Therefore, by Proposition~\ref{monotonicity}, such prime component is a generalized non-terminal valuation over all previous models in the sequence of quasi-flips. 
Note that the exceptional locus of this MMP is contained in the gklt locus of $(X/Z,B+M)$.
Let $U\subset X$ be the open set on which $(X/Z,B+M)$ is generalized klt. 
We may apply Lemma~\ref{lem:finite-klt} to $(U/U,B+M)$, to conclude that there are finitely many 
generalized non-terminal valuations of 
$(X/Z,B+M)$ which are contained in the exceptional locus of this MMP.
We conclude that there are finitely many divisorial valuations that can be extracted as divisors in the sequence of strict ample quasi-flips.

Now, we prove that each of these finitely many divisorial valuations can be extracted at most finitely many times.
Let $\pi_i\colon X_i\dashrightarrow X_{i+1}$ be the $i$-th quasi-flip in the sequence.
For each $i$, we denote by $B_i$ the boundary part of the generalized pair on $X_i$
and by $M_i$ the trace of the b-divisor ${\bf M}$ on $X_i$.
Assume that a non-terminal valuation $E$ is extracted by the quasi-flip $\pi_{i}$ and the quasi-flip $\pi_{j}$ for $i <j$.
Denote by $E_{i+1}$ (resp. $E_{j+1}$) the center of $E$
on $X_{i+1}$ (resp. $X_{j+1})$.
Since $E$ is extracted by both $\pi_i$ and $\pi_j$, 
there exists a quasi-flip between $\pi_i$ and $\pi_j$
which contracts the center of $E$.
This means that it contracts the strict transform of $E_{i+1}$.
By Proposition~\ref{monotonicity}, we have that
\[
a_E(X_{i}/Z,B_{i}+M_{i})<a_E(X_{j}/Z,B_{j}+M_{j}).
\]
This means that
\begin{equation}\label{eq:strict-less-coeff}
{\rm coeff}_{E_{i+1}}(B_{i+1})
>
{\rm coeff}_{E_{j+1}}(B_{j+1}).
\end{equation} 
If there was an infinite sequence of quasi-flips 
that extracts $E$, 
then~\eqref{eq:strict-less-coeff} would imply that
there exists an infinite strictly decreasing sequence 
on the coefficients of the boundary.
Since the sequence of quasi-flips is under a DCC set, the coefficients of $B_i$ belong to a DCC set independent of $i$. 
We deduce that $E$ can only be extracted finitely many times.
Thus, every non-terminal valuation can be extracted only finitely many times.

We conclude that there are finitely many divisorial valuations that can be extracted in the sequence of quasi-flips,
and each of these can be extracted at most finitely many times.
Thus, after finitely many quasi-flips, all quasi-flips do not extract divisors.
By induction on the Picard rank of $X_i$ over $Z$, we conclude that after finitely many quasi-flips,
both the flipping and flipped contractions are isomorphisms in codimension one.
Indeed, each quasi-flip that contracts a divisor will drop the relative Picard rank by one.
Thus, after finitely many quasi-flips, 
the sequence of strict ample klt quasi-flips terminates in codimension one.
\end{proof}

Now, we turn to prove a lemma that will allow us to control
generalized log discrepancies in the interval $(0,1)$ of a generalized klt pair of dimension three. 
This lemma is motivated by~\cite{Sho96}*{Proposition 4.4}.
The conditions of the following lemma are hard to obtain in general settings. However, they are suitable to work with sequences of flips.

\begin{lemma}\label{lem:control-ci}
Let $N\in \zz_{\geq 0}$ and $\epsilon \in \rr_{>0}$.
There exists a constant $I:=I(N,\epsilon)$,
only depending on $N$ and $\epsilon$,
satisfying the following.
Let $(X/Z,B+M)$ be a $\qq$-factorial generalized 
pair of dimension $3$
such that there are at most $N$ divisorial valuations with
$a_E(X/Z,B+M)<1+\epsilon$.
Then, the Cartier index of every Weil divisor on $X$ divides $I$, i.e., for every Weil divisor $D$ on $X$, we have that $ID$ is Cartier.
\end{lemma}

\begin{proof}
Since $X$ is $\qq$-factorial, 
we know that $X$ is klt.
Indeed, by Lemma~\ref{lem:mono-ld}, we know that 
\[
0<a_E(X/Z,B+M)\leq a_E(X/Z),
\]
for every divisorial valuation $E$ over $X$.
We conclude that $X$ 
satisfies the following condition:
there are at most $N$ divisorial valuations 
with $a_E(X/Z)<1+\epsilon$.
By~\cite{Sho96}*{Lemma 4.4.1}, we conclude that
there exists a constant $I:=I(N,\epsilon)$, 
only depending on $N$ and $\epsilon$, 
such that the Cartier index of every Weil divisor on $X$ divides $I$.
\end{proof}

The following two statements allow us to control
certain log discrepancies
for generalized klt pairs. 
As usual, the divisor $M'$ denotes the trace of ${\bf M}$ on a model where it descends.
The condition $(3)$ in Lemmata~\ref{lem:blow-up-comp} and~\ref{lem:control-ld} is satisfied if ${\bf M}$ is NQC. 
Both lemmata use the finite set $\mathcal{R}$ explicitly in the statement.

\begin{lemma}\label{lem:blow-up-comp}
Let $\mathcal{R}\subset \mathbb{R}_{>0}$ be a finite set of positive real numbers.
Let $X$ be a projective variety over $Z$
and let $C\subset X$ be a curve.
Let $(X/Z,B+M)$ be a generalized klt $3$-fold satisfying the following conditions:
\begin{enumerate}
    \item the variety $X$ is smooth at the generic point of $C$,
    \item the coefficients of $B$ belong to $\mathcal{R}$, and
    \item we can write $M'\equiv_Z \sum_j \mu_j M'_j$, where each $M'_j$ is Cartier and
    nef over $Z$ and each $\mu_j\in \mathcal{R}$.
\end{enumerate}
Let $E$ be the exceptional divisor obtained by blowing up the curve $C$. Then, we can write 
\[
a_E(X/Z,B+M) = 
2- \sum_j b_jn_j -\sum_j \mu_j m_j,
\]
where $b_j,\mu_j\in \mathcal{R}$
and $n_j,m_j\in \mathbb{Z}_{\geq 0}$.
Furthermore, if a prime component $B_j$ of $B$ has coefficient $b_j$ and contains $C$, then the corresponding $n_j$ is positive. 
\end{lemma}

\begin{proof}
Let $\pi\colon Y\rightarrow X$ be the blow-up of the curve $C$.
By~\cite{KM98}*{Lemma 2.29}, we know that
$a_E(X/Z)=2$.
We write $B=\sum_j b_jB_j$, where $b_j\in \mathcal{R}$ 
and the $B_j$'s are prime Weil divisors.
Since we are computing the log discrepancy at $E$, we may assume that
each divisor $B_j$ contains the curve $C$.
Furthermore, for each $j$, we have that $B_j$ is Cartier at the generic point of $C$.
Thus, we have that 
\[
{\rm coeff}_E(\pi^*B)= \sum_j b_jn_j,
\]
for certain positive integers $n_j$.
Thus, we can write
\[
a_E(X/Z,B)= 2-\sum_j b_jn_j.
\]
Finally, we can write 
$M\equiv_Z \sum_j \mu_j M_j$, where each $M_j$ is the push-forward of a nef Cartier divisor on a higher model.
In particular, each $M_j$ is pseudo-effective and Weil.
Thus, for each $j$, the divisor $M_j$ is Cartier at the generic point of $C$.
By the negativity lemma~\cite{KM98}*{Proposition 3.39}, we can write
\[
\pi^*M \equiv_Z \sum_j \mu_j M_{j,Y} + \left(\sum_j \mu_j m_j\right)E.
\]
In the above formula, the numbers $m_j$ are nonnegative integers.
Thus, we conclude that 
\[
a_E(X/Z,B+M) = 
2- \sum_j b_j n_j -\sum_j \mu_j m_j,
\]
as claimed.
The last statement follows from the fact that $\pi^*B_j$ contains $E$ in its support if $B_j$ contains $C$.
\end{proof}

\begin{lemma}\label{lem:control-ld}
Let $N\in \mathbb{Z}_{\geq 0}$ 
and $\epsilon \in \mathbb{R}_{>0}$.
Let $\mathcal{R}\subset \mathbb{R}_{>0}$ be a finite set of positive real numbers. 
Let $A(\mathcal{R},N,\epsilon)$ be the set of generalized log discrepancies $\leq 1$ for generalized $\epsilon$-klt pairs $(X/Z,B+M)$ of dimension $3$, satisfying the following conditions:
\begin{enumerate}
\item there are at most $N$ divisorial valuations with 
$a_E(X/Z,B+M)<1+\epsilon$, 
\item the coefficients of $B$ belong to $\mathcal{R}$, and
\item we can write $M' \equiv_Z \sum \mu_j M'_j$, where each $M'_j$ is Cartier and nef over $Z$ and $\mu_j\in \mathcal{R}$.
\end{enumerate}
Then, the set $A(\mathcal{R},N,\epsilon)$ is finite.
\end{lemma}

We note that the statement of Lemma~\ref{lem:control-ld}
is immediate from Lemma~\ref{lem:control-ci} in the case of $\qq$-divisors. 
The difficulty of the statement lies in the $\rr$-coefficients case.

\begin{proof}
Passing to a small $\qq$-factorialization, 
we may assume that each $X$ is $\qq$-factorial.
Let $E$ be a divisorial valuation over $X$ satisfying
$a_E(X/Z,B+M)\in (0,1)$.
By Proposition~\ref{prop:extraction-f}, we can find a 
projective birational morphism $\pi \colon Y\rightarrow X$ over $Z$
that extracts $E$ and its exceptional locus contains a unique divisor.
Since $Y$ and $X$ are $\qq$-factorial, we have that $\rho(Y/X)=1$.
Furthermore, $-E$ is ample over $X$ so the exceptional locus is divisorial.
We can write
\[
\pi^*(K_X+B+M)=
K_Y+cE+B_Y+M_Y,
\]
where $c<1-\epsilon$.
Here, as usual, $B_Y$ is the strict transform of $B$ on $Y$
and $M_Y$ is the push-forward of $M'$ to $Y$.
We aim to show that $c$ belongs to a finite set
that only depends on $\mathcal{R},N$, and $\epsilon$.
By Lemma~\ref{lem:control-ci}, we know that the Cartier index of every Weil divisor on $Y$ divides $I$, where $I$ only depends on $N$ and $\epsilon$.

Now, we proceed to argue that $c$ belongs to a finite set.
Since $\pi\colon Y\rightarrow X$ is an extremal contraction, we have that
$E\cdot C <0$ for any curve $C$ that is contracted by $\pi$.
By~\cite{Sho94}*{Theorem and Remark (3)}, 
we know there exists a curve $C_0 \subsetneq E$,
movable on $E$, with
\[
0>(K_Y+E)\cdot C_0 \geq -3.
\]
The inequality $0>(K_Y+E)\cdot C_0$ holds 
as $E\cdot C_0<0$ and $(B_Y+M_Y)\cdot C_0\geq 0$,
being $C_0$ movable on $E$.
Since the curve $C_0$ is movable on $E$, 
we have that $B_Y\cdot C_0\geq 0$.
By Lemma~\ref{dim:movable-cod-1}, 
we know that
$M_Y\cdot C_0\geq 0$.
We conclude that $(K_Y+cE)\cdot C_0 \leq 0$.
Thus, we deduce that 
\[
(K_Y+E)\cdot C_0 \geq (K_Y+cE)\cdot C_0-3,
\]
from where we get that $0>E\cdot C_0\geq -3\epsilon^{-1}$.
Recall that $E\cdot C_0 \in \zz[I^{-1}]$, where $I$ only depends on $N$ and $\epsilon$. 
We conclude that the intersection
$E\cdot C_0$ can only take finitely many values in $(0,-3\epsilon^{-1}]$.
On the other hand, we have that $3\epsilon^{-1}\geq K_Y\cdot C_0 \geq -3$,
so $K_Y\cdot C_0$ can only take finitely many values.

We claim that $(B_Y+M_Y)\cdot C_0$ can only take finitely many values.
Observe that $(B_Y+M_Y)\cdot C_0\geq 0$.
On the other hand, we can write
\[
B_Y+M_Y= \sum_j b_j B_{Y,j} + \sum_j \mu_j M_{Y,j},
\]
where $b_j,\mu_j\in \mathcal{R}$ and the $B_{Y,j}$'s
and $M_{Y,j}$'s are Weil divisors.
Hence, we have that
\[
(B_Y+M_Y)\cdot C_0 =
\left(\sum_j b_j B_{Y,j} + \sum_j \mu_j M_{Y,j} \right)\cdot C_0 =
\sum_j b_j\frac{n_j}{I} +\sum_j \mu_j \frac{m_j}{I},
\]
where the $n_j$'s and $m_j$'s are nonnegative integers.
Since 
\[
0\leq 
\sum_j b_j\frac{n_j}{I} +\sum_j \mu_j \frac{m_j}{I}  \leq
-(K_Y+E)\cdot C_0 \leq 3,
\]
we conclude that $(B_Y+M_Y)\cdot C_0$ can only take finitely many values.

Finally, since 
\[
(K_Y+cE+B_Y+M_Y)\cdot C_0 =0,
\]
then we can write
\begin{equation}\label{eq:finiteness-intersection}
c = \frac{ K_Y\cdot C_0 + (B_Y+M_Y)\cdot C_0}{-E\cdot C_0}.
\end{equation}
Note that in~\eqref{eq:finiteness-intersection} both the numerator and denominator
belong to a finite set only depending on $\mathcal{R},N$, and $\epsilon$.
 We conclude that $1-c=a_E(X/Z,B+M)$ belongs to a finite set
only depending on $\mathcal{R},N$ and $\epsilon$.
\end{proof}

\begin{proposition}\label{ter3folds}
Let $(X/Z,B+M)$ be a generalized NQC klt pair of dimension $3$.
Then, any sequence of strict ample quasi-flips
for $(X/Z,B+M)$,
under a DCC set, terminates.
\end{proposition}

\begin{proof}
Since $(X/Z,B+M)$ is NQC, the b-nef divisor ${\bf M}$ is NQC.
Let $X'\rightarrow X$ be a model where ${\bf M}$ descends
and $M'$ be the trace of ${\bf M}$ on $X'$.
By the NQC assumption, 
we can write
$M'\equiv_Z \sum_j \mu_j M'_j$
where each
$M'_j$ is Cartier and nef over $Z$
and each 
$\mu_j$ is a nonnegative real number.

We prove this statement in several steps.
We have a sequence of strict ample quasi-flips as follows:
\begin{equation}\label{eq:sequence-strict-ample}
 \xymatrix{
  (X/Z,B+M)\ar@{-->}^-{\pi_{1}}[r] & (X_1/Z,B_1+M_1)\ar@{-->}^-{\pi_{2}}[r] & \dots \ar@{-->}^-{\pi_{i}}[r] & (X_i/Z,B_i+M_i)\ar@{-->}^-{\pi_{i+1}}[r] & \dots   \\ 
 }
\end{equation}
In the previous sequence, 
$M_i$ is the trace of ${\bf M}$ on $X_i$, i.e., 
the equality $M_i={\bf M}_{X_i}$ holds.
The divisor $B_i$ is the boundary part of the generalized pair $(X_i/Z,B_i+M_i)$.
Recall that, by assumption, the coefficients of $B_i$ belong to a fixed DCC set
which is independent of $i$. 
As usual, the flipping contraction of $\pi_i$
will be denoted by $\phi_i \colon X_i \rightarrow W_i$ and the flipped contraction will
be denoted by $\phi^+_i\colon X_{i+1}\rightarrow W_i$.
Here, we are setting $(X_0/Z,B_0+M_0)=(X/Z,B+M)$.\\

\noindent\textit{Step 1:} We reduce to the case in which each $\pi_i$ is small and ${\pi_{i+1}}_* B_i =B_{i+1}$.\\

 By Lemma~\ref{tercod1}, we conclude that
 a sequence as in~\eqref{eq:sequence-strict-ample} terminates in codimension one.
 Thus, after finitely many strict ample quasi-flips, all the quasi-flips will be
 strict ample weak quasi-flips.
 Truncating the sequence~\eqref{eq:sequence-strict-ample}, we may assume that each $\pi_i$ is a strict ample weak quasi-flips.
 By definition of quasi-flips, we have that
 \[
 {\pi_{i+1}}_*B_i\geq B_{i+1}.
 \]
 Let $P\subset X$ be a prime divisor.
 Denote by $P_i$ the strict transform of $P$ on $X_i$.
 The sequence 
 \[
 {\rm coeff}_{P_1}(B_1) \geq
 {\rm coeff}_{P_2}(B_2) \geq \dots \geq
 {\rm coeff}_{P_i}(B_i) \geq \dots 
 \]
 is decreasing, and it belongs to a DCC set.
 Hence, it must stabilize.
 Applying the same argument to each prime divisor on the support of $B$,
 we conclude that, after finitely many
 strict ample weak quasi-flips, 
 the equality ${\pi_{i+1}}_* B_i=B_{i+1}$ holds.
 Truncating the sequence~\eqref{eq:sequence-strict-ample} again, we may assume that each $\pi_i$ is small 
 and ${\pi_{i+1}}_* B_i=B_{i+1}$ holds for each $i\geq 0$.\\

\noindent\textit{Step 2:}  We reduce to the case of $\qq$-factorial flips.\\
 
Consider a strict ample weak klt quasi-flip for a generalized klt pair 
\[
\pi_1 \colon (X/Z,B+M)\dashrightarrow (X_1/Z,B_1+M_1)
\]
with flipping contraction $\phi_1 \colon X \rightarrow W$.
By Lemma~\ref{existence-small-Q}, we can take a small $\qq$-factorialization $(X'/Z,B'+M')$ of $(X/Z,B+M)$.
By ~\cite{BZ16}*{Theorem 4.4}, we can run a relative minimal model program for $(X'/Z,B'+M')$ over $W$
to produce a minimal model $(X'_1/Z,B'_1+M'_1)$ over $W$.
Here, $B'_1$ is the strict transform of $B'$ on $X'_1$.
By the uniqueness of ample models, 
the generalized pair $(X_1/Z,B_1+M_1)$ is the ample model of $(X'_1/Z,B'_1+M'_1)$ over $W$.
In particular, the minimal model
$(X'_1/Z,B'_1+M'_1)$ is a small 
$\qq$-factorialization of $(X_1/Z,B_1+M_1)$.
By construction, the small birational map $(X'/Z,B'+M')\dashrightarrow (X_1/Z,B_1+M'_1)$
decomposes as a sequence of flips over $Z$ for a $\qq$-factorial generalized klt pair over $Z$.
Proceeding inductively with the other strict ample weak
quasi-flips $\pi_i$,
we obtain a sequence of $\qq$-factorial flips.
We denote by $(X_i'/Z,B_i'+M_i')$ the induced small $\qq$-factorialization
of $(X_i,B_i+M_i)$. 
We argue that if the sequence of $\qq$-factorial flips stabilizes, 
then so does the starting sequence. 
Indeed, if the sequence of $\qq$-factorial flips stabilize, then 
for $i\gg 0$
we have $a_E(X_i',B_i'+M_i')=a_E(X'_{i+1},B_{i+1}'+M_{i+1}')$
for every $E$. 
As $(X'_i,B_i'+M_i')\rightarrow (X_i,B_i+M_i)$ is crepant, then
for $i\gg 0$, we have 
$a_E(X_i,B_i+M_i)=a_E(X'_{i+1},B_{i+1}+M_{i+1})$ for every $E$.
Proposition~\ref{monotonicity}, then implies that each $\pi_i$ with $i\gg0$ must be the identity, contradicting the strict condition of the quasi-flip $\pi_i$ (see also Remark~\ref{rem}).
Thus, we may assume that the sequence~\eqref{eq:sequence-strict-ample} is a sequence of $\qq$-factorial flips
and ${\pi_{i+1}}_* B_i=B_{i+1}$ for each $i$.

After these reductions, we may assume that the coefficients of all the boundary divisors $B_i$'s belong to a finite set of positive real numbers.
Thus, there exists a finite set of positive real numbers $\mathcal{R}$, that contains the $\mu_j$'s and all the coefficients of the $B_i$'s.\\

\noindent\textit{Step 3:} We reduce to the case of generalized terminal flips.\\

We are assuming that each $(X_i/Z,B_i+M_i)$ is generalized klt. 
We may let $(X/Z,B+M)$ be the first generalized klt model in the truncated sequence.
Let $\epsilon$ be a positive real number such that
$(X/Z,B+M)$ is generalized $\epsilon$-klt.
By Lemma~\ref{lem:finite-klt}, we know that there is a finite number $N$ of divisorial valuations over $(X/Z,B+M)$ with log discrepancy in the interval $(\epsilon,1+\epsilon)$. 
By the monotonicity Lemma~\ref{monotonicity}, we conclude that for every $i$, the generalized klt pair $(X_i/Z,B_i+M_i)$ has at most $N$ divisorial valuations with log discrepancy in the interval
$(\epsilon,1+\epsilon)$. 
By Lemma~\ref{lem:control-ci}, we know there exists a constant $I$, only depending on $N$ and $\epsilon$, such that the Cartier index of any Weil divisor on any $X_i$ divides $I$.
Furthermore, we conclude that for any $i$ and $E$ with $a_E(X_i/Z,B_i+M_i)\in (0,1]$, we have that
\[
a_E(X_i/Z,B_i+M_i) \in A(\mathcal{R},N,\epsilon).
\]
By Lemma~\ref{lem:control-ld}, the set $A(\mathcal{R},N,\epsilon)$ is a finite set
that only depends on $\mathcal{R},N$ and $\epsilon$.

We claim that after finitely many flips, no flipping locus will contain the center of a generalized non-terminal valuation.
By Proposition~\ref{monotonicity}, each $(X_i/Z,B_i+M_i)$ has finitely many generalized non-terminal valuations, which are a subset of the generalized non-terminal valuations of $(X/Z,B+M)$.
Let $E$ be a generalized non-terminal valuation of $(X/Z,B+M)$.
Note that for each $i$,
if $a_E(X_i/Z,B_i+M_i)\in (0,1]$, then 
$a_E(X_i/Z,B_i+M_i)\in A(\mathcal{R},N,\epsilon)$.
On the other hand, 
if the center of $E$ on $X_i$ is contained in the flipping locus of $\pi_{i+1}$, then we have 
\[
a_E(X_i/Z,B_i+M_i)<a_E(X_{i+1}/Z,B_{i+1},M_{i+1}).
\] 
Since the set $A(\mathcal{R},N,\epsilon)$ is finite, then the conditions:
\begin{enumerate}
    \item $a_E(X_i/Z,B_i+M_i)\in (0,1]$ and,
    \item $c_{X_i}(E)$ is contained in the flipping locus of $\pi_{i+1}$,
\end{enumerate}
can happen only finitely many times.
We conclude that, after finitely many flips,
either no flipping locus contains the center of $E$ in its flipping locus or $E$ is not a generalized non-terminal valuation.
In either case, we conclude that after finitely many flips, no flip will contain the center of a generalized non-terminal valuation. 
In particular, the terminalizations of the generalized pairs in the sequence of flips
are isomorphic in codimension $1$.
By Proposition~\ref{existence-term},
proceeding as in Step 2, we can take a $\qq$-factorial terminalization of each $(X_i,B_i+M_i)$ to reduce to the case
of generalized terminal $3$-fold flips.
Thus, truncating the sequence~\eqref{eq:sequence-strict-ample} again, we may assume that we have a sequence of flips
for generalized terminal $3$-folds.\\

\noindent\textit{Step 4:} We reduce to the case in which $B=0$.\\ 

Let $b$ be the maximal real number among the coefficients of $B$.
By Lemma~\ref{lem:finite-gter}, we know that 
each generalized terminal pair $(X_i/Z,B_i+M_i)$ has finitely many exceptional divisorial valuations with log discrepancy in the interval $(0,2-b)$.
We claim that, after finitely many flips, every flipping locus is disjoint from the components of $B$ whose coefficient is $b$.
Let $D_i\subset B_i$ be a component 
with ${\rm coeff}_{D_i}(B_i)=b$.
First, assume that the flipping curves are $D_i$-negative.
Then, the flipping locus is contained in $D_i$.
By Lemma~\ref{lem:mono-ld}, we know that $X_i$ has terminal singularities. In particular, $X_i$ is smooth at the generic point of each irreducible component of the flipping locus.
Let $Y_i\rightarrow X_i$ be the blow-up of an irreducible component of the flipping locus and let $E$ be the unique exceptional divisor.
By Lemma~\ref{lem:blow-up-comp}, we have that
\[
a_E(X_i/Z,B_i+M_i)=2-\sum b_j\frac{n_j}{I} 
- \sum \mu_j\frac{m_j}{I},
\]
where the $n_j$'s and $m_j$'s are nonnegative integers, and at least one $n_j$ with $b_j=b$ is positive.
Hence, we conclude that $a_E(X_i/Z,B_i+M_i)<2-b$.
Observe that the same argument applies if the flipping curves are $D_i$-trivial and are not disjoint from $D_i$.
Indeed, in such case, the flipping curves are contained in $D_i$ as well.
Analogously, if the flipping curves are $D_i$-positive, then the flipped locus is contained in the strict transform of $D_i$. 
In particular, the flipped locus is contained in a component of $B_{i+1}$ with coefficient $b$.
The same argument yields that 
$a_E(X_{i+1}/Z,B_{i+1}+M_{i+1})<2-b$.
Note that
\[
\mathcal{S}(\mathcal{R},I):=
\left\{ 
2-\sum b_j\frac{n_j}{I} 
- \sum \mu_j\frac{m_j}{I} \mid 
b_j,\mu_j\in \mathcal{R},
n_j,m_j\in \mathbb{Z}_{\geq0} 
\right\} \cap \mathbb{R}_{>0}
\subset \mathbb{R}_{>0}
\] 
is a finite set that only depends on $\mathcal{R}$ and $I$.
Thus, every time that the flipping locus is contained in a component $B_i$ with coefficient $b$, we can find a divisorial valuation whose center is contained in the flipping locus whose log discrepancy belongs to $\mathcal{S}(\mathcal{R},I)\cap (0,2-b)$.
Analogously, every time that the flipped locus 
is contained in a component of $B_{i+1}$ with coefficient $b$, we can find a divisorial valuation whose center is contained in the flipped locus and whose log discrepancy belongs to the finite set $\mathcal{S}(\mathcal{R},I)\cap (0,2-b)$.
Since there are only finitely many exceptional divisorial valuations with generalized log discrepancy in the interval $(0,2-b)$ in this sequence
and $\mathcal{S}(\mathcal{R},I)$ is finite,
we conclude that this can only happen finitely many times.
Indeed, the generalized log discrepancy 
strictly increases if the center of the valuation
is contained in the flipping locus (see Lemma~\ref{monotonicity}).
Thus, after finitely many flips, all flips are disjoint from the components of $B_i$ with coefficient $b$.
Hence, we can truncate the sequence~\eqref{eq:sequence-strict-ample}, so that all flips are disjoint from such components.
Since each $X_i$ is $\qq$-factorial, we can reduce the coefficients of such components to zero.
Indeed, the original MMP remains an MMP 
for the pair $(X_i/Z,B_i-bD_i+M_i)$
as all the flipping curves are $D_i$-trivial.
Then, we can pick the new largest coefficient of the $B_i$'s, and proceed inductively.
We deduce that, after finitely many flips, the flipping locus (and hence the flipped locus) are disjoint from the boundary divisor.
Thus, we can reduce the boundary divisor to zero.
Truncating the sequence~\eqref{eq:sequence-strict-ample} again, we may assume that we have a sequence of flips for $\qq$-factorial generalized terminal pairs without boundary.
\\ 

\noindent\textit{Step 5:} We reduce to the case in which $M=0$.\\ 

In the previous step,
we reduced to the case of flips for generalized $\qq$-factorial terminal pairs without boundary: 
\[
 \xymatrix{
  (X/Z,M)\ar@{-->}^-{\pi_{1}}[r] & (X_1/Z,M_1)\ar@{-->}^-{\pi_{2}}[r] & \dots \ar@{-->}^-{\pi_{i}}[r] & (X_i/Z,M_i)\ar@{-->}^-{\pi_{i+1}}[r] & \dots   \\ 
 }
 \]
By Lemma~\ref{lem:finite-gter}, we know that for each $i$, the pairs $(X_i/Z,M_i)$ have finitely many log discrepancies in the interval $(0,2)$.
Assume that the flip
$\pi_1$ is $M$-negative.
Then, the flipping locus is contained in the diminished base locus of $M$ relatively over $Z$.
Consider an irreducible curve $C$ in the flipping locus. Let $E$ be the exceptional divisor of the blow-up of $C$.
Note that, by Lemma~\ref{lem:blow-up-comp}, we can write
\[
a_E(X/Z,M)=2-\sum \mu_j m_j.
\]
On the other hand,
by Lemma~\ref{lem:blow-up-M},
we conclude that 
$a_E(X/Z,M)<a_E(X/Z)=2$, i.e., 
at least one $m_j>0$.
Thus, we conclude that the divisorial valuation $E$ belongs to the finite set of divisorial valuations with log discrepancy in $(0,2)$.
On the other hand, the set
\[
\mathcal{S}'(\mathcal{R}):=\left\{ 
2-\sum \mu_j m_j \mid \mu_j \in \mathcal{R},
m_j\in \mathbb{Z}_{\geq 0} \right\} 
\cap \mathbb{R}_{\geq 0} \subset \mathbb{R}_{\geq 0}
\]
is a finite set that only depends on $\mathcal{R}$.
If the flip $\pi_1$ is $M$-positive, then the same argument applies to the flipped contraction.
In any case, we can find a divisorial valuation with log discrepancy in the interval $(0,2)\cap \mathcal{S}'(\mathcal{R})$.
Since the set of divisors over $(X/Z,M)$ with log discrepancy in $(0,2)$ is finite and the set $\mathcal{S}'(\mathcal{R})$ is finite, we conclude that this can only happen finitely many times.
Thus, after finitely many flips, all the flips are $M$-trivial, hence $K_X$-flips.
Thus, truncating the sequence~\eqref{eq:sequence-strict-ample} again, we are left with a sequence of terminal threefold flips.
This sequence terminates by~\cite{KM98}*{Theorem 6.17}.
We conclude that the initial sequence terminates.
\end{proof}

\section{\texorpdfstring{Special termination for $\qq$-factorial gdlt 4-fold flips}{Special termination for Q-factorial gdlt 4-fold flips}}\label{S3}

In this section, we prove the special termination of $\qq$-factorial
gdlt $4$-fold flips.
This means that in any sequence of $\qq$-factorial gdlt $4$-fold flips, the flips are eventually disjoint from the generalized log canonical centers.
Special termination for lc pairs has been studied by Shokurov and Fujino
(see, e.g.,~\cite{Sho04}*{\S 4}
and~\cite{Fuj07}).
We recall the divisorial generalized adjunction proved in~\cite{BZ16}.
We phrase the following proposition in terms of $4$-folds since this is the case in which we will apply it.
The following is also proved in~\cite{HL18}*{2.8}.

\begin{proposition}{\rm (cf.~\cite{BZ16}*{Proposition 4.9})}\label{adj}
Let $\Lambda$ be a DCC set of nonnegative real numbers.
There exists a DCC set $\Omega$ of nonnegative real numbers, only depending on $\Lambda$, satisfying the following.
Let $(X/Z,B+M)$ be a  gdlt $4$-fold and $S$ be a prime component of $\lfloor B\rfloor$.
Let $X'\rightarrow X$ be a model where ${\bf M}$ descends
and let $M'$ be the trace of ${\bf M}$ on $X'$.
Assume that the following conditions are satisfied:
\begin{enumerate}
\item we can write $M'\equiv_Z \sum_j \lambda_j M'_j$, where each
$M'_j$ is Cartier and nef over $Z$
and $\lambda_j\in \Lambda$, and
\item the coefficients of $B$ belong to $\Lambda$.
\end{enumerate}
Then, we can write
\begin{equation}\label{eq:adj}
(K_X+B+M)|_S = K_S+B_S+M_S,
\end{equation}
where $(S/Z,B_S+M_S)$ is a generalized dlt $3$-fold satisfying:
\begin{enumerate}
\item we can write $M'_S \equiv_Z \sum_j \omega_j M'_{S,j}$, where each $M'_{S,j}$ is Cartier and nef over $Z$ and $\omega_j \in \Omega$, and 
\item the coefficients of $B_S$ belong to $\Omega$.
\end{enumerate}
\end{proposition}

\begin{proof}
The adjunction formula~\eqref{eq:adj} is defined in~\cite{BZ16}*{Definition 4.7}.
All the statements of the proposition are proved
in~\cite{BZ16}*{Proposition 4.9}, 
except the generalized dlt-ness of $(S/Z,B_S+M_S)$.
Note that $M'_S$ is defined by the pull-back of $M'$ to the strict transform of $S$ on a higher model of $X$. In particular, we can write
$M'_S \equiv_Z \sum_j \lambda_j M'_{S,j}$, where
$\lambda_j\in \Lambda$.
On the other hand, by~\cite{BZ16}*{Proposition 4.9}, we know that the coefficients of $B_S$ belong to a DCC set $\Omega$ that only depends on $\Lambda$.
In~\cite{BZ16}*{Definition 4.7}, it is proved that
$(S/Z,B_S+M_S)$ is glc.
It suffices to prove that it is gdlt.
Let $U\subset X$ be as in the definition of gdlt.
Observe that $U_S:=U\cap S$ is a non-empty open subset of $S$.
Note that $U_S$ is smooth and $B_S|_{U_S}$ has simple normal crossing support.
Furthermore, every generalized log canonical center of $(X/Z,B+M)$ intersects $U$ non-trivially and is given by strata of $\lfloor B \rfloor$.
By~\cite{Fil20}*{Theorem 1.6}, we deduce that every log canonical center of $(S/Z,B_S+M_S)$ intersects $U\cap S$ non-trivially and is given by strata of $\lfloor B_S\rfloor$.
We conclude that the pair $(S/Z,B_S+M_S)$ is generalized dlt.
\end{proof}

\begin{proposition}\label{qflipslcp}
Let $(X/Z,B+M)$ be a NQC generalized dlt $4$-fold.
Let $V$ be a log canonical center of $(X/Z,B+M)$.
Consider a sequence of $(K_X+B+M)$-flips
\begin{equation}\nonumber
 \xymatrix{
  (X/Z,B+M)\ar@{-->}^-{\pi_{1}}[r] & (X_1/Z,B_1+M_1)\ar@{-->}^-{\pi_{2}}[r] & (X_2/Z,B_2+M_2)\ar@{-->}^-{\pi_3}[r] & \dots \ar@{-->}^-{\pi_{i}}[r] & (X_i/Z,B_i+M_i)\ar@{-->}^-{\pi_{i+1}}[r] & \dots   \\ 
 }
\end{equation}
that does not contain $V$ in any flipping locus. 
Then it induces a sequence of birational transformations 
\begin{equation}\nonumber
 \xymatrix{
  (V/Z,B_V+M_V)\ar@{-->}^-{\pi_{V,1}}[r] & (V_1/Z,B_{V_1}+M_{V_1})\ar@{-->}^-{\pi_{V,2}}[r] & (V_2/Z,B_{V_2}+M_{V_2})\ar@{-->}^-{\pi_{V,3}}[r] & \dots \ar@{-->}^-{\pi_{V,i}}[r] & (V_i/Z,B_{V_i}+M_{V_i})\ar@{-->}^-{\pi_{V,i+1}}[r] &\\ 
 }
\end{equation}
where $(V/Z,B_V+M_V)$ is a generalized dlt pair.
Each birational map $\pi_{V,i+1}$ is either a strict ample $(K_{V_i}+B_{V_i}+M_{V_i})$-quasi-flip 
or the identity. In the latter case, the flipping locus of $\pi_{i+1} \colon X_{i}\dashrightarrow X_{i+1}$ is disjoint from $V$.
Furthermore, the sequence $\pi_{V,i}$ is under a DCC set.
\end{proposition}

\begin{proof}
Note that $V$ is the intersection of exactly $\codim_X(V)$ components of $\lfloor B\rfloor$.
For each generalized dlt pair $(X_i/Z,B_i+M_i)$, we can apply Proposition~\ref{adj} $\codim_X(V)$ times to obtain a generalized dlt pair $(V_i/Z,B_{V,i}+M_{V,i})$.
In particular, the sequence of birational maps $\pi_{V,i}$ is under a DCC set.
This proves the last assertion.
It is clear that $V_{i+1}$ is the strict transform of $V_i$ with respect to $\pi_{i+1}$.
This holds because $\pi_i$ is an isomorphism at the generic point of $V_i$.

By induction, it suffices to prove that $\pi_1$ induces a strict ample $(K_V+B_V+M_V)$-quasi-flip if its
flipping locus intersects $V$ non-trivially.
Let $\phi\colon X\rightarrow W$ and $\phi^+\colon X^+\rightarrow W$ be the flipping contraction and flipped contraction of $\pi_1$, respectively.
Since $(V/Z,B+M)$ 
and $(V_1/Z,B_{V_1}+M_{V_1})$ are both gldt, then the first condition of Definition~\ref{qfgen} holds. 
Let $C\subset V_1$ be a curve that is being contracted by the flipping  contraction. 
By the adjunction formula~\eqref{eq:adj}, we have that
\[
(K_{V}+B_{V}+M_{V})\cdot C = (K_{X}+B+M)|_{V} \cdot C < 0.
\]
We conclude that the $\rr$-Cartier $\rr$-divisor $K_{V}+B_{V}+M_{V}$ is anti-ample over $W$. 
Analogously, we can check that $K_{V_1}+B_{V_1}+M_{V_1}$ is ample over $W$, so the second condition of Definition~\ref{qfgen} holds.
We claim that the projective birational map $V \dashrightarrow V_1$ is a
$(K_{V}+B_{V}+M_{V})$-negative birational map.
This means that we can find a common resolution
$p \colon V' \rightarrow V$ and $q\colon V'\rightarrow V_1$, where the following inequality holds
\[
 p^*( K_{V}+B_{V}+M_{V} ) - q^*( K_{V_1}+B_{V_1}+M_{V_1}) \geq 0.
\]
Indeed, we can take $V'$ to be the strict transform of $V$ 
on a log resolution of the flip $X\dashrightarrow X_1$ that is an isomorphism at the generic point of $V$.
The existence of such log resolution follows from the generalized dlt condition of the pair $(X/Z,B+M)$.
In particular,
both $(X/Z,B+M)$ and $(X_1/Z,B_1+M_1)$ are log smooth at the generic point of $V$ and $V_1$, respectively.
Let $p_X$ and $q_X$ be the projections to $X$ and $X_1$ from this common resolution.
We can write
\[
p^*( K_{V}+B+M ) - q^*( K_{V_1}+B_{V_1}+M_{V_1}) = 
(p_X^*(K_X+B+M)-q_X^*(K_{X_1}+B_1+M_1))|_{V'}= E \geq 0.
\]
Indeed, since $V$ is not contained in the flipping locus, the support of the effective divisor $p^*( K_{V}+B+M ) - q^*( K_{V_1}+B_{V_1}+M_{V_1})$ does not contain $V'$.
Therefore, we have
the following inequality 
\[
\psi^+_* B_{V_1} = \psi^+_*(B_V - p_*E)  \leq  \psi_* B_V,
\]
where $\psi$ and $\psi^+$ are the flipping and flipped contraction of $\pi_{V,1}$.
This gives us the third condition of Definition~\ref{qfgen}.

Finally, it suffices to check that the flipping locus of $\pi_{i}$ is disjoint from $V$ whenever $\pi_{V,i}$ is the identity.
Analogously, it suffices to show that whenever the flipping locus of $\pi_i$ intersects $V$, the map $\pi_{V,i}$ is not the identity. We assume that the flipping locus of $\pi_1$ intersects $V$. 
Observe that the flip $\pi_1 \colon X \dashrightarrow X_1$ is ample.
By Proposition~\ref{monotonicity}, the coefficient of 
\begin{equation}\label{ambientdifference}
p_X^*(K_X+B+M)-q_X^*(K_{X_1}+B_1+M_1)
\end{equation}
at any $p_X$-exceptional or $q_X$-exceptional prime divisor with center on the flipping or flipped locus is positive.
It suffices to show that there exists a component $E_i$ in the support of~\eqref{ambientdifference} that intersects $V$ non-trivially.
Indeed, by the negativity lemma~\cite{KM98}*{Proposition 3.39.(2)} a fiber of $p$ is either disjoint from the support of~\eqref{ambientdifference} or is contained in its support. Therefore, a fiber of $p$ over the intersection of $V$ and the flipping locus of $\pi_1 \colon X \dashrightarrow X_1$
must be contained in the support of the divisor in~\eqref{ambientdifference}.
We conclude that there exists a prime divisor $E_i$ in the support of~\eqref{ambientdifference} that intersects $V$ non-trivially.
We deduce that the divisor $E$ is non-trivial. This implies that $\pi_{V,1}\colon V\dashrightarrow V_1$ is not the identity. 
\end{proof}

\begin{proposition}[{\rm Special termination}]\label{special-term}
Let $(X/Z,B+M)$ be a NQC generalized dlt $4$-fold.
Then, a sequence of flips for $(X/Z,B+M)$ over $Z$ is eventually disjoint from the generalized log canonical centers of $(X/Z,B+M)$.
\end{proposition}

\begin{proof}
By Corollary~\ref{cor:finiteness-glcc}, we know that $(X/Z,B+M)$ has finitely many generalized log canonical centers.
Consider a sequence of flips for $(X/Z,B+M)$ as follows
\begin{equation}\nonumber
 \xymatrix{
  (X/Z,B+M)\ar@{-->}^-{\pi_{1}}[r] & (X_1/Z,B_1+M_1)\ar@{-->}^-{\pi_{2}}[r] & (X_2/Z,B_2+M_2)\ar@{-->}^-{\pi_3}[r] & \cdots \ar@{-->}^-{\pi_{i}}[r] & (X_i/Z,B_i+M_i)\ar@{-->}^-{\pi_{i+1}}[r] & \cdots    \\
 }
\end{equation}
We may assume that the number of generalized log canonical centers stabilizes. 
Hence, by Proposition~\ref{monotonicity}, no generalized log canonical center is contained in any flipping locus of this MMP.
Assume the sequence is not eventually disjoint from the generalized log canonical centers.
Then, there exists a generalized log canonical center of $(X/Z,B+M)$ that is intersected infinitely many times by some flipping loci 
but it is never contained in any of them.
We let $V$ be a generalized log canonical center with this condition that is minimal with respect to inclusion.
Hence, the flipping loci of the sequence of flips for $(X/Z,B+M)$ intersect the strict transform of $V$ 
infinitely many times, but they are eventually disjoint from the glc centers strictly contained in $V$.
By Proposition~\ref{qflipslcp}, 
we obtain an infinite sequence of strict ample $(K_V+B_V+M_V)$-quasi-flips. 
By~\cite{Fil20}*{Theorem 1.6}, all such quasi-flips are eventually disjoint from the glc centers of $(V,B_V+M_V)$.
The variety $V$ has dimension at most three.

Thus, up to re-indexing to omit the flips that induce the identity on $V$ (see Proposition~\ref{qflipslcp}), we obtain an infinite sequence
\begin{equation}\nonumber
 \xymatrix{
  (V/Z,B_V+M_V)\ar@{-->}^-{\pi_{V,1}}[r] & (V_1/Z,B_{V_1}+M_{V_1})\ar@{-->}^-{\pi_{V,2}}[r] & (V_2/Z,B_{V_2}+M_{V_2})\ar@{-->}^-{\pi_{V,3}}[r] & \cdots \ar@{-->}^-{\pi_{V,i}}[r] & (V_i/Z,B_{V_i}+M_{V_i})\ar@{-->}^-{\pi_{V,i+1}}[r] &\\ 
 }
\end{equation}
of strict ample $(K_V+B_V+M_V)$-quasi-flips,
which are eventually disjoint from the glc centers of $(V,B_V+M_V)$.
In particular, up to truncating the sequence, these are generalized klt quasi-flips.
By Proposition~\ref{adj}, 
we know that this sequence of quasi-flips is under a DCC set $\Omega$
and 
$M'_V =\sum_j \omega_j M'_{V,j}$, where each $M'_{V,j}$ is Cartier nef over $Z$ and $\omega_j \in \Omega$.
Here, $M'_V$ is the trace of the b-divisor associated to $M_V$ on a model where it descends.
If $V$ has dimension at most two, since the sequence of flips is eventually disjoint from the generalized log canonical centers, this sequence terminates by Lemma~\ref{tercod1}.
Hence, we may assume $V$ is a $3$-fold.

Denote by $\psi \colon V \rightarrow W$ the flipping contraction of $\pi_{V,1}$. 
By Theorem~\ref{thm:gen-dlt-mod-4}, we may consider $(V'/Z,B_{V'}+M_{V'})$ a $\qq$-factorial gdlt modification of
$(V/Z,B_{V}+M_{V})$.
Denote by
$\phi\colon V'\rightarrow V$ be the corresponding morphism.
We claim that the diminished base locus of $K_{V'}+B_{V'}+M_{V'}$ over $W$ does not intersect the generalized log canonical centers of $(V'/Z,B_{V'}+M_{V'})$.
Indeed, we have that
the diminished base locus of 
$K_V+B_V+M_V$ over $W$ equals the flipping locus, which is disjoint from the glc centers of $(V/Z,B_V+M_V)$.
By Lemma~\ref{lem:dim-pull-back} applied to the union $L$ of the glc centers of $(V/Z,B_V+M_V)$, we conclude that the diminished base locus of 
$K_{V'}+B_{V'}+M_{V'}=\pi^*(K_V+B_V+M_V)$ over $W$ is disjoint from the glc centers
of $(V'/Z,B_{V'}+M_{V'})$.

We may run a minimal model program for the generalized dlt pair $(V'/Z,B_{V'}+M_{V'})$ with scaling of an ample divisor over $W$.
We claim that this minimal model program terminates with a good minimal model
$(V'_1/Z, B_{V_1'}+M_{V_1'})$ over $W$.
By Proposition~\ref{prop:FT},
the morphism $V'\rightarrow W$ is a Fano type morphism. 
Thus, we know that this minimal model program terminates with a good minimal model.
Furthermore,
$K_{V_1}+B_{V_1}+M_{V_1}$ is the ample model for $K_{V'_1}+B_{V'_1}+M_{V'_1}$ over $W$.
Therefore, $K_{V'_1}+B_{V'_1}+M_{V'_1}$ is semiample over $W$ and we have a projective birational morphism $V'_1\rightarrow V_1$.
Note that the diminished base locus of $(V'/Z,B_{V'}+M_{V'})$ over $W$ is disjoint from $\lfloor B_{V'}\rfloor$. Then, the above sequence of flips is also a sequence of flips for the generalized klt pair
\[
(V'/Z,B_{V'}-\epsilon \lfloor B_{V'}\rfloor+M_{V'}).
\]
Thus, we replaced the strict ample quasi-flip $\pi_{V,1}$ with a finite sequence of flips for a generalized klt $3$-fold.
Proceeding inductively, 
the above infinite sequence of strict ample quasi-flips for generalized dlt $3$-folds 
induces an infinite sequence of flips for generalized klt $3$-folds.
By Proposition~\ref{ter3folds}, we conclude that the sequence must terminate.
\end{proof}

\section{\texorpdfstring{Termination of flips for $\qq$-factorial pseudo-effective dlt 4-folds}{Termination of flips for Q-factorial pseudo-effective dlt 4-folds}}\label{S5}

In this section, we prove our main theorem.
First, we prove the termination of flips for $\qq$-factorial pseudo-effective dlt $4$-folds.
The idea is to use the approach of Birkar of
computing log canonical thresholds with respect to effective divisors~\cite{Bir07}.
The main difference is that we compute the log canonical threshold with respect to some generalized boundary induced by a minimal model.
Using ACC for generalized 
log canonical thresholds, the existence of gdlt modifications, and the special termination for gdlt $4$-fold flips, we will prove Proposition~\ref{term-dlt}.
We will need the following lemma.

\begin{lemma}\label{existence-min-model}
Let $(X/Z,B+M)$ be a
$4$-dimensional
NQC pseudo-effective glc pair.
Then, we can write $K_X+B+M\equiv_{Z} N+P$,
such that 
$N$ is an effective divisor
and 
$P$ is a NQC b-nef divisor.
\end{lemma}

\begin{proof}
In~\cite{Sho09}*{Corollary 2}, it is proved that every pseudo-effective log canonical $4$-fold has a minimal model.
Then, the statement follows from~\cite{LT19}*{Corollary 3.4}.
\end{proof}

\begin{proposition}\label{term-dlt}
Let $(X/Z,B+M)$ be a $\qq$-factorial
NQC 
pseudo-effective gdlt $4$-fold.
Then, any sequence of flips for $(X/Z,B+M)$ over $Z$ terminates.
\end{proposition}

\begin{proof}
Let  
\begin{equation}\nonumber
 \xymatrix{
  (X/Z,B+M)\ar@{-->}^-{\pi_{1}}[r] & (X_1/Z,B_1+M_1)\ar@{-->}^-{\pi_{2}}[r] & (X_2/Z,B_2+M_2)\ar@{-->}^-{\pi_3}[r] & \dots \ar@{-->}^-{\pi_{i}}[r] & (X_i/Z,B_i+M_i)\ar@{-->}^-{\pi_{i+1}}[r] & \dots   \\ 
 }
\end{equation}
be a sequence of flips for the $\qq$-factorial pseudo-effective dlt $4$-fold.
We proceed by contradiction.
Assume that this sequence is infinite.
By Lemma~\ref{existence-min-model}, 
we can write
\[
K_X+B+M\equiv_{Z} N+P, 
\]
where: 
\begin{enumerate}
    \item $N$ is an effective $\rr$-divisor, and 
    \item $P$ is the trace on $X$ of a NQC
     divisor ${\bf P}$ which is b-nef over $Z$.
\end{enumerate} 
For each $X_i$, we denote by $N_i$ the push-forward of $N$ to $X_i$.
For each $X_i$, we denote by $P_i$ the trace
of ${\bf P}$ on $X_i$.
Hence, we have that $K_{X_i}+B_i+M_i\equiv_{Z} N_i+P_i$
for each $i$.
By Proposition~\ref{prop:finiteness}, we know that
\[
\lambda_{i,1}:={\rm glct}((X_i/Z,B_i+M_i);N_i+P_i)
\]
is finite unless $K_{X_i}+B_i+M_i$ is nef over $Z$.
Thus, we may assume that each $\lambda_{i,1}$ is finite.
We claim that $\lambda_{i+1,1}\geq \lambda_{i,1}$.
Since $N_i+P_i\equiv_{Z} K_{X_i}+B_i+M_i$, 
we have that $(X_i/Z,B_i+\lambda_{i,1}N_i + (M_i+\lambda_{i,1}P_i))$ is a $\qq$-factorial generalized log canonical pair and $\pi_{i+1}$ is a flip for this generalized pair.
Indeed, we have that 
\[
K_{X_i}+B_i+\lambda_{i,1}N_i+(M_i+\lambda_{i,1}P_i) \equiv_{Z} (\lambda_{i,1}+1)(K_{X_i}+B_i+M_i).
\]
Hence, $(X_{i+1}/Z,B_{i+1}+\lambda_{i,1}N_{i+1}+ 
(M_{i+1}+\lambda_{i,1}P_{i+1}))$ is again generalized log canonical, proving the claim.
By Proposition~\ref{monotonicity}, 
if $\lambda_{i+1,1}=\lambda_{i,1}$, then $\pi_{i+1}$ does not contain all the 
glc centers of $(X_i/Z,B_i+\lambda_{i,1} N_i+ (M_i+\lambda_{i,1} P_i))$
in its flipping locus.\\

\noindent \textit{Claim:} There exists a positive number $\lambda$ satisfying the following conditions:
\begin{enumerate}
    \item eventually, all flipping loci are contained in the generalized log canonical locus of $(X_i/Z,B_i+\lambda N_i+ (M_i+\lambda P_i))$, and
    \item for $i$ large enough, the generalized pairs $(X_i/Z,B_i+\lambda N_i +(M_i+\lambda P_i))$ have a glc center that is intersected by infinitely many flipping loci.
\end{enumerate}

\begin{proof}[Proof of the Claim]
Assume that the two above conditions are never satisfied. We aim to contradict
the ascending chain conditions for generalized log canonical thresholds~\cite[Theorem 1.5]{BZ16}. 
To do so, we will construct a sequence of generalized log canonical thresholds associated to this sequence of flips. 
Note that for every $\lambda>0$, the non-glc locus of each model
$(X_i/Z,B_i+\lambda N_i+(M_i+\lambda P_i))$ is closed, the proof
being verbatim to such of~\cite[Corollary 2.31]{KM98}.
We let $\lambda_{\infty,1}$ to be the limit of the $\lambda_i$'s.
By~\cite{BZ16}*{Theorem 1.5}, for $i$ large enough, we will have that 
$\lambda_{i,1}=\lambda_{\infty,1}$.
Note that $(1)$ is satisfied for $\lambda_{\infty,1}$.
By Proposition~\ref{monotonicity},
there exists $i_1$ satisfying the following:
for $i\geq i_1$, the generalized pairs
$(X_i/Z,B_i+\lambda_{\infty,1}N_i+(M_i+\lambda_{\infty,1}P_i))$
are not gklt and their glc centers stabilize.
This means that
the glc centers of $(X_{i+1}/Z,B_{i+1}+\lambda_{\infty,1}N_i+(M_{i+1}+\lambda_{\infty,1}P_i))$ are the strict transforms of the glc centers
of $(X_i/Z,B_i+\lambda_{\infty,1}N_i+(M_i+\lambda_{\infty,1}P_i))$.
If $(2)$ does not hold, then eventually all flips are disjoint from the glc centers
of $(X_i/Z,B_i+\lambda_{\infty,1}N_i+(M_i+\lambda_{\infty,1}P_i))$.
This implies that for $i\gg 0$, the gklt locus of both 
\[
(X_i/Z,B_i+\lambda_{\infty,1}N_i+(M_i+\lambda_{\infty,1}P_i))
\text{ and }
(X_{i+1}/Z,B_{i+1}+\lambda_{\infty,1}N_i+(M_{i+1}+\lambda_{\infty,1}P_i))
\]
maps properly onto its image in $W_i$. 
Up to increasing $i_1$, we may assume that for every $i\geq i_1$, the flipping locus of
$\pi_{i+1}$ is disjoint from the glc centers
of $(X_i/Z,B_i+\lambda_{\infty,1}N_i+(M_i+\lambda_{\infty,1}P_i))$.
By Proposition~\ref{prop:finiteness}, for $i\geq i_1$, the generalized log canonical threshold of $(X_i/Z,B_i+M_i)$ with respect to $N_i+P_i$ on the gklt locus
of $(X_i/Z,B_i+\lambda_{\infty,1}N_i+(M_i+\lambda_{\infty,1}P_i))$ is finite.
Furthermore, this glct is larger than $\lambda_{\infty,1}$.

For $k\geq 2$, we define $\lambda_{\infty,k}$ and $i_k$ inductively.
The numbers $\lambda_{\infty,1}$ and $i_1$ are already defined.
For every $i\geq i_{k-1}$, we define 
$\lambda_{i,k}$ to be the generalized log canonical threshold of $(X_i/Z,B_i+M_i)$ with respect to $N_i+P_i$ on the gklt locus of $(X_i/Z,B_i+\lambda_{\infty,k-1}N_i+(M_i+\lambda_{\infty,k-1}P_i))$.
By induction, these generalized log canonical thresholds are finite and larger than $\lambda_{\infty,k-1}$.
By construction, the $\lambda_{i,k}$ forms a non-decreasing sequence on $i$ and they satisfy the ascending chain condition.
We let $\lambda_{\infty,k}$ to be the limit of the $\lambda_{i,k}$'s. 
Observe that $\lambda_{\infty,k}>\lambda_{\infty,k-1}$. By~\cite{BZ16}*{Theorem 1.5}, for $i$ large enough, we have that
$\lambda_{i,k}=\lambda_{\infty,k}$.
The Condition (1) is satisfied for the generalized pairs $(X_i/Z,B_i+\lambda_{\infty,k}N_i+(M_i+\lambda_{\infty,k}P_i))$. 
Indeed, by the definition of $\lambda_{\infty,k}$, the non-glc centers of the pair $(X_i/Z,B_i+\lambda_{\infty,k}N_i+(M_i+\lambda_{\infty,k}P_i))$ are contained in the non-gklt centers of $(X_i/Z,B_i+\lambda_{\infty,k-1}N_i+(M_i+\lambda_{\infty,k-1}P_i))$,
from which the flip $\pi_{i+1}$ are disjoint.
By Proposition~\ref{monotonicity},
there exists $i_k$ satisfying the following.
For $i\geq i_k$, 
the glc centers of 
$(X_i/Z,B_i+\lambda_{\infty,k}N_i+(M_i+\lambda_{\infty,k}P_i))$
that are not contained 
in the non-glc locus of
$(X_i/Z,B_i+\lambda_{\infty,k-1}N_i+(M_i+\lambda_{\infty,k-1}P_i))$ stabilize.
If $(2)$ does not hold, then eventually each flip $\pi_{i+1}$ is disjoint from the non-gklt locus of 
$(X_i/Z,B_i+\lambda_{\infty,k}N_i+(M_i+\lambda_{\infty,k}P_i))$.
Up to increasing $i_k$, we may assume that for every $i\geq i_k$, the flipping locus of $\pi_{i+1}$ is disjoint from the non-gklt locus
of $(X_i/Z,B_i+\lambda_{\infty,k}N_i+(M_i+\lambda_{\infty,k}P_i))$.
In particular, for $i\gg 0$, the gklt locus of both 
\[
(X_i/Z,B_i+\lambda_{\infty,k}N_i+(M_i+\lambda_{\infty,k}P_i)) 
\text{ and }
(X_{i+1}/Z,B_{i+1}+\lambda_{\infty,k}N_{i+1}+(M_{i+1}+\lambda_{\infty,k}P_{i+1}))
\]
maps properly onto its image in $W_i$. 
By Proposition~\ref{prop:finiteness}, for $i\geq i_k$, the generalized log canonical threshold of $(X_i/Z,B_i+M_i)$ with respect to $N_i+P_i$ on the gklt locus of
$(X_i/Z,B_i+\lambda_{\infty,k}N_i+(M_i+\lambda_{\infty,k}P_i))$ is finite.

Proceeding inductively, we obtain an infinite increasing sequence of log canonical thresholds
\[
\lambda_{\infty,1}<\lambda_{\infty,2}<\dots<
\lambda_{\infty,k}<\dots
\]
This contradicts~\cite{BZ16}*{Theorem 1.5}.
We conclude that such $\lambda$ exists.
\end{proof}

Let $\lambda$ be as in the claim.
Truncating our sequence of flips, we have a sequence of generalized log canonical flips for the generalized pair $(X/Z,B+\lambda N + (M+\lambda P))$.
The previous statement follows from the negativity of the MMP. 
By Proposition~\ref{monotonicity}, we know that for $i\gg 0$ the generalized log canonical centers of 
$(X_i/Z,B_i+\lambda N_i +(M+\lambda P_i))$ contained in the glc locus, stabilize. 
Further, for $i\gg 0$, the flipping locus of $\pi_i$ is disjoint from the non-glc locus 
of $(X_i/Z,B_i+\lambda N_i + (M+\lambda P_i))$, which is a closed subset. 
By Proposition~\ref{thm:gen-dlt-mod-4}, we can take a $\qq$-factorial generalized dlt modification $(Y/Z,B_Y+M_Y)$ of $(X/Z,B+\lambda N +(M+\lambda P))$.
Let $\phi \colon X \rightarrow W$ be the flipping contraction of $\pi\colon X \dashrightarrow X_1$.
We denote by $X^0$ the glc locus
of $(X/Z,B+\lambda N+(M+\lambda P))$
and by $W^0$ its image on $W$.
We denote by $Y^0$ the preimage of $X^0$ in $Y$.
We argue that the morphism $Y^0\rightarrow W^0$ is projective. 
The non-glc locus $K$ of $(X/Z,B+\lambda N+(M+\lambda P))$
is closed and is disjoint from the exceptional locus of $X\rightarrow W$.
Then, we have that $X^0=X\setminus K$ is proper over $W^0=W\setminus \phi(K)$. 
Therefore, the morphism $Y^0\rightarrow W^0$ is of Fano type by Proposition~\ref{prop:FT}.
Let $B_{Y^0}$ (resp. $M_{Y^0}$)
be the restriction of $B_Y$ (resp. $M_Y$) to $Y^0$.
We run a $(K_Y+B_Y+M_Y)$-MMP over $W$.
This is also a MMP for
$K_{Y^0}+B_{Y^0}+M_{Y^0}$ over $W^0$ 
as ${\rm Ex}(\phi)$ is contained in $X^0$.
Hence, it must terminate as $Y^0$ is of Fano type over $W^0$.
Let $Y_1$ be the model over $W$ where this MMP terminates.
Denote by $B_{Y_1}$ (resp. $M_{Y_1}$) 
the push-forward of $B_Y$ (resp. $M_Y$) to $Y_1$.
Note that $X_1$ is the ample model for $K_{Y_1}+B_{Y_1}+M_{Y_1}$ over $W^0$.
Then, by taking the ample model
of $K_{Y_1}+B_{Y_1}+M_{Y_1}$ over $W$,
we obtain a birational morphism $Y_1\rightarrow X_1$ which is a gdlt modification of
$(X_1,B_1+\lambda N_1 + (M+\lambda P_1))$
over $X_1^0$. 
In the previous statement, we have used the fact that $Y^1$ is proper over $W^0$. 
Here, $X_1^0$ is the glc locus 
of $(X_1/Z,B_1+\lambda N_1 + (M_1+\lambda P_1))$.

We let $\phi_i\colon X_i\rightarrow W_i$
be the flipping contraction
of $\pi_i\colon X_i\dashrightarrow X_{i+1}$.
Inductively, for each $(Y_i/Z,B_{Y_i}+M_{Y_i})$, we may run a minimal model program over $W_i$
which terminates with a good minimal model
$(Y_{i+1}/Z,B_{i+1}+M_{i+1})$ over $W_i$.
Moreover, $Y_{i+1}\dashrightarrow X_{i+1}$ is
a gdlt modification on the glc locus of
$(X_{i+1}/Z,B_{i+1}+\lambda N_{i+1}+ (M_{i+1}+\lambda P_{i+1}))$.
Thus, we obtain a sequence of flips for the $\qq$-factorial gdlt pair $(Y/Z,B_Y+M_Y)$.
By construction, infinitely many flips intersect
$\lfloor B_Y\rfloor$.
This contradicts Proposition~\ref{special-term}.
\end{proof}

\begin{corollary}\label{cor:term-gdlt}
Let $(X/Z,B+M)$ be a
$4$-dimensional
$\qq$-factorial
NQC 
pseudo-effective 
generalized dlt pair.
Then, any minimal model program for $(X/Z,B+M)$ over $Z$ terminates.
\end{corollary}

\begin{proof}
The Picard rank of $X$ relative to $Z$ decreases after a divisorial contraction 
and stays equal after a flip.
We conclude that this MMP can only have finitely many divisorial contractions.
Thus, it suffices to prove that every sequence of flips is finite.
This is the content of Proposition~\ref{term-dlt}.
\end{proof}

\begin{theorem}\label{termination-gen}
Let $(X/Z,B+M)$
be a $4$-dimensional 
NQC 
pseudo-effective 
generalized lc pair.
Then, any minimal model program for $(X/Z,B+M)$ over $Z$ terminates.
\end{theorem}

\begin{proof}
Let 
\begin{equation}\nonumber
 \xymatrix{
  (X/Z,B+M)\ar@{-->}^-{\pi_{1}}[r] & (X_1/Z,B_1+M_1)\ar@{-->}^-{\pi_{2}}[r] & (X_2/Z,B_2+M_2)\ar@{-->}^-{\pi_3}[r] & \dots \ar@{-->}^-{\pi_{i}}[r] & (X_i/Z,B_i+M_i)\ar@{-->}^-{\pi_{i+1}}[r] & \dots   \\ 
 }
\end{equation}
be a minimal model program for $(X/Z,B+M)$ over $Z$.
The number of generalized log canonical centers of $(X/Z,B+M)$ is finite by Corollary~\ref{cor:finiteness-glcc}.
After finitely many steps, we may assume that the generalized log canonical centers stabilize.
We may truncate the sequence and assume this is the case.
Let $\phi \colon X\rightarrow W$ be the flipping contraction of $\pi_1$.
Let $(Y/Z,B_Y+M_Y)$ be a $\qq$-factorial gdlt modification of $(X/Z,B+M)$.
By~\cite{BZ16}*{Theorem 4.4},
we can run a minimal model program for $(Y/Z,B_Y+M_Y)$ over $W$ with scaling of an ample divisor over $W$ wich terminates with a good minimal model. 
Let $(Y_1/Z,B_{Y_1}+M_{Y_1})$ be its good minimal model over $W$. Then, $(X_1/Z,B_1+M_1)$ is the ample model of $(Y_1/Z,B_{Y_1}+M_{Y_1})$ over $W$.
Proceeding inductively, we obtain a minimal model program 
\begin{equation}\nonumber
 \xymatrix{
  (Y/Z,B_Y+M_Y)\ar@{-->}^-{\pi_{1}}[r] & (Y_1/Z,B_{Y_1}+M_{Y_1})\ar@{-->}^-{\pi_{2}}[r] & \dots \ar@{-->}^-{\pi_{i}}[r] & (Y_i/Z,B_{Y_i}+M_{Y_i})\ar@{-->}^-{\pi_{i+1}}[r] & \dots   \\ 
 }
\end{equation}
for a $\qq$-factorial generalized dlt pair $(Y/Z,B_Y+M_Y)$ of dimension $4$.
Furthermore,
$K_Y+B_Y+M_Y$ is pseudo-effective over $Z$.
Hence, by Corollary~\ref{cor:term-gdlt}, we conclude that the minimal model program for the $(Y_i/Z,B_{Y_i}+M_{Y_i})$'s terminates. 
Thus, for $i\gg 0$ we have $a_E(Y_i/Z,B_{Y_i}+M_{Y_i})=a_E(Y_{i+1}/Z,B_{Y_{i+1}}+M_{Y_{i+1}})$ for every $E$. 
As each $(Y_i/Z,B_{Y_i}+M_{Y_i})\rightarrow (X_i/Z,B_i+M_i)$ crepant, 
for $i\gg 0$ we have
$a_E(X_i/Z,B_i+M_i)=a_E(X_{i+1}/Z,B_{i+1}+M_{i+1})$ for every $E$.
Thus, Proposition~\ref{monotonicity} implies that the sequence of flips
for $(X/Z,B+M)$ terminates after finitely many steps. 
\end{proof}

\begin{proof}[Proof of Theorem~\ref{termination}]
This follows from Theorem~\ref{termination-gen}.
\end{proof}

\begin{proof}[Proof of Theorem~\ref{thm:min-models}]
This follows from Theorem~\ref{termination-gen}.
\end{proof}

\bibliographystyle{habbvr}
\bibliography{bib}

\end{document}